 \patchcmd\Gread@eps{\@inputcheck#1 }{\@inputcheck"#1"\relax}{}{}
\patchcmd\Gread@eps{\@inputcheck#1 }{\@inputcheck"#1"\relax}{}{}
\patchcmd\Gread@eps{\@inputcheck#1 }{\@inputcheck"#1"\relax}{}{}
\newcommand{\intav}[1]{\mathchoice {\mathop{\vrule width 6pt height 3 pt depth  -2.5pt
\kern -8pt \intop}\nolimits_{\kern -6pt#1}} {\mathop{\vrule width
5pt height 3  pt depth -2.6pt \kern -6pt \intop}\nolimits_{#1}}
{\mathop{\vrule width 5pt height 3 pt depth -2.6pt \kern -6pt
\intop}\nolimits_{#1}} {\mathop{\vrule width 5pt height 3 pt depth
-2.6pt \kern -6pt \intop}\nolimits_{#1}}}
\def\polhk#1{\setbox0=\hbox{#1}{\ooalign{\hidewidth\lower1.5ex\hbox{`}\hidewidth\crcr\unhbox0}}}
\renewcommand{\div}{\operatorname{div}}
\newcommand{\osc}{\operatorname{osc}}
\renewcommand{\div}{\operatorname{div}}
\newtheorem{theorem}{Theorem}[section]
\newtheorem{lemma}{Lemma}[section]
\newtheorem{corollary}{Corollary}[section]
\newtheorem{proposition}{Proposition}[section]
\newtheorem{remark}{Remark}[section]
\newtheorem{assumption}{A}
\begin{document}

\title{Regularity of solutions to a class of variable--exponent fully nonlinear elliptic equations}
\author{Anne C. Bronzi,\;\; Edgard A. Pimentel, \smallskip \\ Giane C. Rampasso, \;\; and \;\; Eduardo V. Teixeira}
\date{}

\newcommand{\Addresses}{{
  \bigskip
  \footnotesize

  \noindent Anne C. Bronzi, \textsc{Department of Mathematics, University of Campinas,
    Campinas, Brazil.}\par\nopagebreak
  \textit{E-mail address}, A. Bronzi: \texttt{annebronzi@ime.unicamp.br}

  \medskip

\noindent Edgard A. Pimentel, \textsc{Department of Mathematics, Pontifical Catholic University of Rio de Janeiro, Rio de Janeiro, Brazil.}\par\nopagebreak
  \textit{E-mail address}, E. Pimentel: \texttt{pimentel@puc-rio.br}

  \medskip

\noindent Giane C. Rampasso, \textsc{Department of Mathematics, University of Campinas,
    Campinas, Brazil.}\par\nopagebreak
  \textit{E-mail address}, G. Rampasso: \texttt{girampasso@ime.unicamp.br}

  \medskip

\noindent Eduardo V. Teixeira, \textsc{Department of Mathematics, University of Central Florida,
    Orlando, Florida, USA}\par\nopagebreak
  \textit{E-mail address}, E. Teixeira: \texttt{eduardo.teixeira@ucf.edu}

  \medskip

}}

\maketitle

\begin{abstract}

\noindent In the present paper, we propose the investigation of variable-exponent, degenerate/singular elliptic equations in non-divergence form.  This current endeavor parallels the by now well established theory of functionals satisfying nonstandard growth condition, which in particular encompasses problems ruled by  the $p(x)$-laplacian operator. Under rather general conditions, we prove viscosity solutions to variable exponent fully nonlinear elliptic equations are locally of class $C^{1,\kappa}$ for a universal constant $0< \kappa < 1$. A key feature of our estimates is that they do not depend on the modulus of continuity of exponent coefficients, and thus may be employed to investigate a variety of problems whose ellipticity degenerates and/or blows-up in a discontinuous fashion.

\medskip

\noindent \textbf{Keywords}: Fully nonlinear degenerate/singular equations; variable exponent; regularity in H\"older spaces.

\medskip 

\noindent \textbf{MSC(2010)}: 35B65; 35J70; 35J75; 35J60.


\end{abstract}

\vspace{.1in}
	
\section{Introduction}
	
We investigate regularity of viscosity solutions to variable-exponent, fully nonlinear elliptic equations of the form
\begin{equation}\label{mainequationIntro}
	F(x, Du, D^2u) \,=\,f(x) \;\;\;\;\; \mbox{in}\;\;\;\;\; B_1,
\end{equation}
where $F(x, p, M)$ is elliptic with respect to the matrix argument $M$; however ellipticity may degenerate and/or blow-up at different rates along the critical region $\mathscr{C}:=\{ p =0\}$ --- which, in heuristic terms, operates as if it were a sort of {\it abstract free boundary } of the problem.  Indeed, we are interested in partial differential equations (PDEs) of the form \eqref{mainequationIntro} for which $D_M F(x,p,M) \sim |p|^{\theta(x)}$. 

\smallskip 

A meaningful prototypical example we have in mind concerns equations of the form
\begin{equation}\label{mainequation}
	|Du|^{\theta(x)} F(D^2u) \,=\,f(x) \;\;\;\;\; \mbox{in}\;\;\;\;\; B_1,
\end{equation}
for a $(\lambda,\Lambda)$-elliptic operator $F$ and a variable exponent given by a function $\theta \colon B_1\to\mathbb{R}$ satisfying minimal conditions to be specified later. Our goal is to establish differentiability and local H\"older continuity of the gradient for solutions of such a class of equations. 

\smallskip 

We are particularly interested in obtaining such estimates with no continuity assumption on the variable exponent $\theta$. We also aim at a comprehensive regularity theory which allows $\theta$ to impel degenerate and singular characters on the diffusibility of the governing operator. Among applications we have in mind are two-phase free boundary problems prescribing different degeneracy laws in each phase. The governing operator for such a problem would be
$$
	\mathscr{F}(x,u, Du, D^2u) = \left | Du \right |^{p_{-} \chi_{\{u\le 0\}} + p_{+} \chi_{\{u > 0\}} } F(D^2u) = G(X,u),
$$
which can be treated by approximation from operators studies in this current work. For such an endeavor, it is critical to establish estimates that do not depend on the continuity of variable exponents. 

\smallskip 

Degenerate elliptic PDEs with variable exponents appear na\-turally in several branches of applied mathematics, as an attempt to adjust diffusibility of the process more efficiently.  An emblematic application comes from the theory of image enhancements.  The variational approach to image processing consists in examining a true image $u \colon B_1\subset\mathbb{R}^2\to\mathbb{R}$ as the minimizer of a functional with variable diffusibility attributes. 
A toy-model in this setting is given by the minimization problem
\[
	 \int \left ( \left|Du\right|^{\theta(x)}\,+\,\frac{\sigma}{2}\left|u\,-\,d\right|^2  \right ) dx \longrightarrow \text{min}, 
\]	
where $d \colon B_1\subset\mathbb{R}^2\to\mathbb{R}$ is the so called measured image. It is defined as $d(x)\,:=\,u(x)\,+\,\zeta$, with $\zeta$ denoting a noise term. 
Applications of this sort has fostered a well established variational theory to investigate functionals of the general form
\begin{equation}\label{eq_var1}
	I[u]\,:=\,\int_{B_1}L(x,Du(x))dx,
\end{equation}
under a natural growth condition on the Lagrangian  $L \colon B_1\times\mathbb{R}^d\to\mathbb{R}$, namely:
\begin{equation}\label{eq_var2}
	\left|p\right|^{\theta(x)}\,\leq\,L(x,p)\,\leq\,C\left(1\,+\,\left|p\right|^{\theta(x)}\right),
\end{equation}	
with $\theta(x)>1$. We refer the reader to \cite{levine06} and the references therein. Different choices of $\theta(x)$ account for distinct algorithms in image denoising and reconstruction. For example, the case $\theta(x)\equiv 1$ is called ROF; see \cite{rof}. This method is known for preserving edges. From a technical viewpoint, the choice $\theta(x)\equiv 1$ suggests a mathematical treatment in the space of functions of bounded variation $BV(B_1)$, which is a desirable feature of the algorithm, for edge reconstruction requires the minimizers to be discontinuous functions.  A drawback of this technique is referred in literature as staircasing; i.e., the presence of noise in smooth regions of the image may lead to piecewise constant regions in the processed image. To bypass this issue, an alternative is to prescribe $\theta(x)\equiv 2$. Although it prevents staircasing effect, this choice falls short in image reconstruction for it fails in preserving edges. It is clear that the choice of fixed $\theta(x)\equiv \theta$ satisfying $1<\theta<2$  should import features of both regimes. However, being fixed, the parameter would favor either the reconstruction of smooth regions or the edges preservation.

\smallskip 

In order to circumvent this rigidity, the natural alternative is to allow the exponent $\theta$ to depend on the space variable $x\in B_1$. The first toy-model incorporating this feature appears in \cite{blomgren}; the author proposes a functional of the form
\[
	\int_{B_1}\left|Du\right|^{\theta\left(\left|Du\right|\right)}dx,
\]
where $\theta:\mathbb{R}\to\mathbb{R}$ is such that $\theta(t)\to 2$ as $t\to0$ and $\theta(t)\to 1$ as $t\to\infty$. For completeness, we mention that variational exponents appear also in the context of electrorheological fluids \cite{mathphys1,mathphys2,mathphys3} and the thermistor problem \cite{zhikovther}; see also \cite{antontsev1994} and the references therein.

\smallskip 

Regularity issues pertaining to the minimization problem \eqref{eq_var1}--\eqref{eq_var2} are, from the perspective of numerical analysis,  paramount, whereas from the mathematics viewpoint, rather delicate. With respect to the latter, a number of important developments has been obtained in the literature. If $\omega$ denotes the  modulus of continuity of $p(x)$, lower and higher regularity estimates on local minimizers depend in a critical way on $\omega$. Indeed, under the assumption
\begin{equation}\label{eq_modcon0}
	\limsup_{r\to0}\,\omega(r)\ln\left(\frac{1}{r}\right) \le  L  < +\infty,
\end{equation}
it is possible to obtain higher integrability of the gradient of minimizers, i.e. $D u \in L^q$, as well as local H\"older continuity of $u$, for some $0<\alpha \ll 1$, see for instance \cite{AF, V96}.  Under slightly stronger assumption than \eqref{eq_modcon0}, namely taking $L=0$,  {\sc Arcebi and Mingione} proved in \cite{acerbi_mingione01} that minimizers to \eqref{eq_var1} are locally $\alpha$-H\"older continuous, for every $\alpha\in(0,1)$.

\smallskip 

The results mentioned above refer to lower level regularity, as in the constant exponent case, a classical result due to {\sc Ural'tseva} assures local minimizers are $C^{1,\alpha}$, for some $0<\alpha \ll 1$. The question about minimal assumptions on $\omega$ as to develop a $C^{1,\alpha}$ regularity theory for  local minimizers of  \eqref{eq_var1}--\eqref{eq_var2} was also settled in \cite{acerbi_mingione01}, where authors show local H\"older continuity of the gradient of minimizers provided $\theta(x)$ is H\"older continuous and $L(x,p)$ is twice-differentiable with respect to $p$ in $B_1\times\left(\mathbb{R}^d\setminus\left\{0\right\}\right)$; see also \cite{coscia_mingione} for earlier results of that sort.

\smallskip 

The case of systems is the subject of \cite{acerbi_mingione_pisa}. In that paper, the authors produce a partial regularity result for the minimizers of \eqref{eq_var1}--\eqref{eq_var2}. In fact, under H\"older continuity of the exponent and regularity conditions on the Lagrangian $L$, the authors prove the H\"older continuity of the gradient in a subset $\Omega\subset B_1$, satisfying $\left|B_1\setminus\Omega\right|=0$. Here, minimizers are taken in the Sobolev space $W^{1,1}_{loc}(B_1,\mathbb{R}^d)$.

\smallskip 

The regularity of minimizers to \eqref{eq_var1}--\eqref{eq_var2} has an obvious counterpart in the regularity of the (weak) solutions to the associated Euler-Lagrange equation. In addition, the $\theta(x)$-growth regime suggests the use of Lebesgue and Sobolev spaces with variable exponents; see \cite{diening1}.

\smallskip 

In line with this observation, the results in \cite{acerbi_mingione_crelle} intersect those two approaches. In that paper, the authors examine the regularity of the solutions to 
\begin{equation*}\label{eq_euler1}
	-\div a(x,Du)\,=\,-\div\left(\left|f(x)\right|^{\theta(x)-2}f(x)\right)\;\;\;\;\;\mbox{in}\;\;\;\;\;B_1,
\end{equation*}
where $a(x,p)$ and $f$ are given and satisfy a set of conditions concerning growth and regularity. A typical assumption on $a(x,p)$ would be
\begin{equation*}\label{eq_euler2}
	\left|a(x,p)\right|\,\leq\,C\left(1\,+\,\left|p\right|^2\right)^\frac{\theta(x)-1}{2}.
\end{equation*}
Under such a growth condition, and log-H\"older continuity of $\theta(x)$, the authors prove $\left|Du\right|^{\theta(x)}\in L^q_{loc}(B_1)$, provided $\left|f\right|^{\theta(\cdot)}\in L^q_{loc}(B_1)$, for $q>1$.

\smallskip 

More recently, \cite{xianlingfan} studied the regularity of the solutions to
\[
	\div a(x,Du)\,=\,f(x,Du)\;\;\;\;\;\mbox{in}\;\;\;\;\;B_1
\]
under growth and regularity conditions on $a(x,p)$ and the nonlinearity $f(x,p)$. In particular, $a$ and $f$ satisfy a $\theta(x)$-growth condition, where this exponent is supposed to be H\"older continuous. The author proves that solutions are of class ${C}^{1,\alpha}_{loc}(B_1)$, for some $\alpha\in(0,1)$ unknown.

\smallskip 

While the theorems described above are deep and sharp,  it is relevant to highlight that in the variational theory no regularity results are known (or even valid) when $\theta$ is assumed to be just bounded and measurable.

\smallskip 

Regularity theory for degenerate/singular fully nonlinear equations in non-divergence form:
\begin{equation}\label{eq1}
	\left|Du\right|^p F(D^2u)\,=\,f\;\;\;\;\;\mbox{in}\;\;\;\;\;B_1,
\end{equation}
has also attracted great attention in the past years, see \cite{BD1, birregrad, DFQ, Imb-Silv1, damedugle, ART2, TeixPota} among several other works on this subject. By now we have a fairly good understanding of the underlying regularity theory for solutions of equation \eqref{eq1}. Local H\"older continuity was proven independently in \cite{DFQ} and \cite{I} by means of Alexandroff-Bakelman-Pucci estimate; see also \cite{IS2} for a much more robust treatment.  Upon boundedness assumption on the source function $f$, {\sc Imbert and Silvestre} proved in \cite{Imb-Silv1} that solutions are locally of class $C^{1,\alpha}$, for some $0< \alpha \ll 1$. Under convexity assumption on the map $M \mapsto F(M)$, the optimal gradient H\"older exponent for all solutions to \eqref{eq1} turns out to be precisely  $\alpha := \frac{1}{1\,+ p}$; see \cite{damedugle} for details.

\smallskip 

In this present article, we launch the study of nonvariational PDEs of the form \eqref{mainequation}; however presenting variable-exponents such as in the variational theory accounted in  \eqref{eq_var1}--\eqref{eq_var2}. 
Indeed, under minimal conditions we will prove that viscosity solutions to 
\[
	\left|Du\right|^{\theta(x)} F(D^2u)\,= f(x),\;\;\;\;\;\mbox{in}\;\;\;\;\;B_1,
\]
are of class $C^{1,\alpha}$ in $B_{1/2}$. Localized and sharp estimates will also be produced as a combination of  analytic and geometric tools. Hence, this present article should be understood as both a generalization of the ${C}^{1,\alpha}$ regularity estimate known for the constant-exponent equation, e.g. \cite{birreg,Imb-Silv1,damedugle}, as well as a parallel endeavor to the variable-exponent variational theory, e.g. \cite{coscia_mingione,acerbi_mingione01,xianlingfan}.

\smallskip 

As we conclude this introduction, let us briefly comment on techniques used in the proofs. While the strategy put forward in this article has certainly been greatly benefitted by previous works, such as \cite{birreg,Imb-Silv1, IS2, damedugle}, several new difficulties had to be overcome by means of new ideas and tools. As a way of example, to establish results that do not depend on the continuity of $\theta(x)$, required that geometric regularity transmission methods to be embedded in a much finer setting, see Section  \ref{sct TP}. Also, that our estimates allow the PDE to alternate between the degenerate and the singular regimes, in different regions of the domain, entails additional layers of complexity and required a detailed treatment.  In turn the comprehensive investigation put  forward in this work does involve much more robust analysis as to unveil the contribution of each regime to the geometry of the solutions. We believe the ideas and methods introduced here are bound to be applicable in a number of other related problems.

\smallskip 

Consequential to our tangential approach is the pointwise improved regularity of the solutions. Under continuity assumptions on $\theta(x)$, we examine the growth of the solutions \emph{at a particular point} of the domain. Here, continuity assumptions build upon the pointwise behavior of the exponent to rule out regime-switching, as the ellipticity vanishes. In this scenario, the tangential analysis recovers enhanced, optimal information on the regularity of the solutions, in the form of a pointwise description of the growth rate at the particular point.

\smallskip 

The remainder of this paper is organized as follows: Sections \ref{sec_assump} and \ref{sec_preliminary}  present some basics of the theory and details our main assumptions. In Section \ref{sec_compactness} we put forward lower regularity results; those aim at producing compactness for the solutions to auxiliary problems appearing further in our arguments. Section \ref{sct TP} yields a geometric tangential path connecting the regularity theory of variable coefficient equation to the uniform elliptic one. Section \ref{sec_c1alpha} details the proof of Theorem \ref{teo_c1alpha}. Finally, improved pointwise regularity is the subject of Section \ref{Sharp sct}.

\bigskip

\noindent {\bf Acknowledgments:} AB is funded by CNPq-Brazil (\#312119/2016-0). EPl is partially funded by CNPq-Brazil (\#307500/2017-9 and 433623/2018-7) by FAPERJ-Brazil (\# E-26/201.609/2019) and by baseline/start-up funds from the Department of Mathematics at PUC-Rio. GR is funded by CNPq-Brazil (\#140674/2017-9). ET is partially sponsored by senior faculty UCF start-up grant. This study was also financed in part by the Coordenação de Aperfeiçoamento de Pessoal de Nível Superior - Brasil (CAPES) - Finance Code 001.

\section{Assumptions and main results}\label{sec_assump}

In this section, we describe the main assumptions of the paper.  First, we detail the conditions imposed on the fully nonlinear operator $F$:
\begin{assumption}[Ellipticity of $F$]\label{assump_operatorF}
We suppose $F:\mathcal{S}(d)\to\mathbb{R}$ is uniformly $(\lambda,\Lambda)$-elliptic, i.e.,
\[
	\lambda\left\|N\right\|\,\leq\,F(M+N)\,-\,F(M)\,\leq\,\Lambda\left\|N\right\|,
\]
for some $0<\lambda\leq \Lambda$, and every $M,\,N\in\mathcal{S}(d)$ with $N\geq 0$. In addition, we assume, with no loss of generality, $F(0) = 0$.
\end{assumption}

For simplicity, we shall restrict the analysis to the theory of continuous viscosity solutions. Thus, source function $f(x)$ as well as variable exponent $\theta(x)$ will always be assumed continuous. However, an important aspect of our results is that they do not depend upon continuity of the variable exponent $\theta$. This is critical for applications we have in mind. We require, nonetheless, a lower bound on $\theta$, as described below:

\begin{assumption}[Lower bound of the variable exponent]\label{assump_theta1}
We suppose $\theta\in{C}(\overline{B_1})$ satisfies
\[
	\inf_{x\in B_1}\theta(x)\,>\,-1.
\]
\end{assumption}

We comment once more that differentiability estimates proven in this article will depend solely upon $L^\infty$-norm of $\theta$, and thus are independent of any modulus of continuity on the exponent function. To highlight this fact, we introduce the following notation:

$$
	\theta^{+}(x) := \max \{0, \theta(x)\} \quad \text{and} \quad \theta^{-}(x) := \min \{0, \theta(x)\}.
$$

The relevance of $\theta^+$ and $\theta^-$ also lies in unveiling how the regularity profile varies as the PDE switches regime from degenerate to singular and vice-versa.

We recall a consequence of Krylov-Safonov Harnack inequality, see \cite{C1}, is that viscosity solutions to the homogeneous equation $F(D^2h) = 0$ are locally of class $C^{1,\bar\alpha}$ for a universal $0< \bar\alpha < 1$, i.e. depending only on dimension, and ellipticity constants $\lambda$ and $\Lambda$. Furthermore
$$
	\|h\|_{C^{1,\bar{\alpha}}(B_{1/2})} \le C \|h\|_{L^\infty(B_1)},
$$
for another universal constant $C>1$. 

We are ready to state our main theorem.

\begin{theorem}[H\"older regularity of the gradient]\label{teo_c1alpha}
Let $u\in {C}(B_1)$ be a viscosity solution to \eqref{mainequation}. Suppose A\ref{assump_operatorF} and A\ref{assump_theta1} are in force.
Then $u\in {C}^{1,\gamma}_{loc}(B_1)$, for all  $\gamma$ verifying 
$$
	\gamma < \min \left \{\bar{\alpha}, \frac{1}{1+\|\theta^{+}\|_\infty + \|\theta^{-}\|_\infty } \right \}.
$$
In addition, there exists a universal constant $C>0$, such that
\[
	 \|u\|_{L^\infty(B_{1/2})} + \sup\limits_{\substack{x,y \in B_{1/2} \\ x\not = y}}  \frac{\left | \nabla u(x) - \nabla u(y) \right | }{|x-y|^\gamma} \leq C\left(1+\left\|u\right\|_{L^\infty(B_1)}\,+\,\left\|f\right\|^{\frac{1}{1+\inf_{B_1}\theta}}_{L^\infty(B_1)}\right).  
\]
\end{theorem}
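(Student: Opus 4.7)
The strategy is a Caffarelli-type geometric iteration, adapted to accommodate the variable-exponent nature of the equation. First, by a routine scaling argument I would reduce to the case $\|u\|_{L^\infty(B_1)} \le 1$ and $\|f\|_{L^\infty(B_1)} \le \varepsilon$, where $\varepsilon>0$ is a small, to-be-chosen universal constant. The scaling $u \mapsto u/K$ with $K\sim 1+\|u\|_\infty+\|f\|_\infty^{1/(1+\inf\theta)}$ is compatible with the equation precisely because of the homogeneity $|Du|^{\theta(x)}$, and this is exactly where Assumption A\ref{assump_theta1} enters to normalize the source term. By translation invariance it then suffices to establish the estimate at the origin, i.e.\ to produce an affine function $\ell_\infty$ with $|u(x)-\ell_\infty(x)| \le C|x|^{1+\gamma}$ uniformly.

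The heart of the argument is the following inductive claim: fix $\gamma<\min\{\bar\alpha,\,1/(1+\|\theta^+\|_\infty+\|\theta^-\|_\infty)\}$; there exist a universal $\rho\in(0,1/2)$ and a sequence of affine functions $\ell_k(x)=a_k+b_k\cdot x$ satisfying
\[
  \|u-\ell_k\|_{L^\infty(B_{\rho^k})} \le \rho^{k(1+\gamma)},\qquad |a_{k+1}-a_k|+\rho^k|b_{k+1}-b_k| \le C\rho^{k(1+\gamma)}.
\]
The base case $k=0$ is the normalization, with $\ell_0\equiv 0$. For the inductive step, I would rescale by setting
\[
  u_k(x)\,:=\,\frac{u(\rho^k x)-\ell_k(\rho^k x)}{\rho^{k(1+\gamma)}},
\]
so that $\|u_k\|_{L^\infty(B_1)}\le 1$. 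A direct computation shows $u_k$ solves, in the viscosity sense,
\[
  \bigl|\,\rho^{k\gamma}Du_k(x)+b_k\,\bigr|^{\theta(\rho^k x)}\,F_k(D^2u_k(x))\,=\,\rho^{k(1-\gamma)}f(\rho^k x),
\]
where $F_k(M):=\rho^{k(1-\gamma)}F(\rho^{k(\gamma-1)}M)$ is $(\lambda,\Lambda)$-elliptic with the same constants as $F$. The bound on $\gamma$ is designed so that, in both the regime where $|b_k|\lesssim\rho^{k\gamma}$ (writing $b_k=\rho^{k\gamma}q_k$ with $|q_k|$ bounded, producing an extra factor $\rho^{-k\gamma\,\theta(\rho^k x)}$) and the regime where $|b_k|$ is of order one, the effective right-hand side remains of order $\varepsilon$; here the contribution of $\|\theta^+\|_\infty$ arises from the degenerate regime ($\theta\ge 0$, where $\rho^{-k\gamma\theta^+}$ magnifies the source) while $\|\theta^-\|_\infty$ accounts for the singular regime when translating by $b_k$ and controlling the modulus $|Du_k+q_k|^{\theta^-}$ under normalization. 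In both cases $u_k$ falls into the same universal class treated by the tangential/approximation machinery of Section \ref{sct TP}.

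Given this uniform control of the rescaled equation, I invoke the approximation lemma from Section \ref{sct TP}, producing $h_k\in C^{1,\bar\alpha}(B_{1/2})$ with $F(D^2 h_k)=0$ and $\|u_k-h_k\|_{L^\infty(B_{1/2})}\le\delta$, for a universal $\delta$ as small as desired by choosing $\varepsilon$ small. By the Krylov--Safonov $C^{1,\bar\alpha}$ estimate recalled in the text, the affine polynomial $\tilde\ell_k(x):=h_k(0)+\nabla h_k(0)\cdot x$ satisfies $\|h_k-\tilde\ell_k\|_{L^\infty(B_\rho)}\le C\rho^{1+\bar\alpha}$. Choosing $\rho$ so that $C\rho^{1+\bar\alpha}\le \tfrac{1}{2}\rho^{1+\gamma}$ (possible since $\gamma<\bar\alpha$) and then $\delta$ so that $\delta\le \tfrac{1}{2}\rho^{1+\gamma}$, I obtain $\|u_k-\tilde\ell_k\|_{L^\infty(B_\rho)}\le\rho^{1+\gamma}$. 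Unfolding the rescaling yields $\ell_{k+1}$ and verifies the two induction hypotheses simultaneously, completing the inductive step. Summability of $|b_{k+1}-b_k|\le C\rho^{k\gamma}$ gives a limit $\nabla\ell_\infty=\lim b_k$ and $|b_k-\nabla\ell_\infty|\le C\rho^{k\gamma}$, from which the pointwise $C^{1,\gamma}$ estimate at the origin follows by a standard interpolation across scales between $\rho^{k+1}$ and $\rho^k$. The quantitative bound displayed in the statement is recovered by unwinding the initial normalization.

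The main obstacle I anticipate is the inductive step: verifying that for every choice of $b_k$ the rescaled operator is genuinely in a class amenable to the approximation lemma. This requires a dichotomy argument that treats the degenerate behavior ($|b_k|$ small, $\theta\ge 0$) and the singular behavior ($|b_k|$ small, $\theta<0$) in a unified way, and explains the appearance of the sum $\|\theta^+\|_\infty+\|\theta^-\|_\infty$ in the exponent; the delicate point is that, because no continuity on $\theta$ is assumed, the local regime of the equation can switch from degenerate to singular across scales, so the approximation lemma must absorb this possibility without needing pointwise information on $\theta$.
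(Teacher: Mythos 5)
Your proposal follows essentially the same path as the paper's own proof: normalization via the scaling in Subsection \ref{subsec_small}, a Caffarelli-type geometric iteration building affine approximants $\ell_k$ with the two displayed induction inequalities, rescaling so that the Approximation Lemma of Section \ref{sct TP} applies uniformly (with $q=b_k/\rho^{k\gamma}$ and the smallness condition $\|f[1+|q|]^{-\theta(\cdot)}\|_\infty\le\varepsilon$ absorbing the possible regime-switching across scales), the universal choices of $\rho$ and $\delta$ tied to $\gamma<\bar\alpha$ and to $1-\gamma(1+\|\theta^+\|_\infty+\|\theta^-\|_\infty)>0$, and finally passing from the scale-by-scale estimates to pointwise differentiability and H\"older continuity of the gradient. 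The account is accurate and matches the paper's strategy in all essential respects, including the dichotomy on $|b_k|$ and the origin of the exponent $1/(1+\|\theta^+\|_\infty+\|\theta^-\|_\infty)$.
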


\begin{remark}
The optimal regularity in Theorem \ref{teo_c1alpha} accounts for a diffusion process that degenerates \emph{and} blows up in different subregions of the domain. In particular, it unveils the precise contribution of both regimes to the regularity of the solutions. As we should expect, a hybrid process yields lower regularity levels, when compared to  a purely degenerate or purely singular diffusion. We will return to this issue in the last Section of this work.
\end{remark}

\begin{remark}
We notice that Theorem \ref{teo_c1alpha} recovers the sharp regularity obtained in \cite{damedugle}, for the fixed-exponent case. Indeed, if $\theta(x)\equiv\theta$, then either $\theta^+\equiv 0$ or $\theta^-\equiv 0$. In any case, the restriction on the exponent $\gamma$ above reads
\[
	\gamma < \min \left \{\bar{\alpha}, \frac{1}{1+\theta}\right \}.
\]
\end{remark}
The next section puts forward some elementary notions and gathers a few auxiliary results.

\section{Preliminary material}\label{sec_preliminary}

In this section we collect some elementary notions and former results to be used in the paper. We start with the definition of the Pucci extremal operators. 

Let $0<\lambda\leq\Lambda$ be fixed. The Pucci extremal operators $\mathcal{M}^{\pm}_{\lambda,\Lambda}:\mathcal{S}(d)\to\mathbb{R}$ are given by
\[
	\mathcal{M}^+_{\lambda,\Lambda}(M)\,:=\,\Lambda\sum_{e_i>0}e_i\,+\,\lambda\sum_{e_i<0}e_i
\]
and
\[
	\mathcal{M}^-_{\lambda,\Lambda}(M)\,:=\,\lambda\sum_{e_i>0}e_i\,+\,\Lambda\sum_{e_i<0}e_i,
\]
where $(e_i)_{i=1}^d$ are the eigenvalues of $M$. Notice that $(\lambda,\Lambda)$-ellipticity can be stated in terms of $\mathcal{M}^{\pm}_{\lambda,\Lambda}$: an operator $F \colon \mathcal{S}(d)\to\mathbb{R}$ is $(\lambda,\Lambda)$-elliptic if
\begin{equation}\label{eq_ellipticpucci}
	\mathcal{M}^-_{\lambda,\Lambda}(N)\,\leq\,F(M\,+\,N)\,-\,F(M)\,\leq\,\mathcal{M}^+_{\lambda,\Lambda}(N),
\end{equation}
for every $M,\,N\in\mathcal{S}(d)$, with $N\geq0$.

The inequalities in \eqref{eq_ellipticpucci} build upon the so-called Ishii-Lions Lemma to produce initial levels of compactness for the solutions; see Section \ref{sec_compactness}. Next we recall the Ishii-Lions result. 

Let $G:B_1\times \mathbb{R}^d\times\mathcal{S}(d)\to\mathbb{R}$ be a $(\lambda,\Lambda)$-elliptic operator and $u$ a normalized viscosity solution for the equation 
\[
G(x,Du,D^2u)=0.
\]
The Ishii-Lions Lemma reads as follows:

\begin{proposition}[Ishii-Lions Lemma]\label{lemma_ishiilions} 
Let $\Omega\subset B_1$ and $\psi$ be twice continuously differentiable in a neighborhood of $\Omega\times \Omega$. Define $v:\Omega \times \Omega\to\mathbb{R}$ as
\[
	v(x,y)\,:=\,u(x)\,-\,u(y).
\]
Suppose $(\overline{x},\overline{y})\in \Omega\times \Omega$ is a local maximum of  $v\,-\psi$ in $\Omega\times \Omega$. Then, for each $\varepsilon>0$, there exist matrices $X$ and $Y$ in $\mathcal S(d)$ such that
\[
	G\left(\overline{x},D_x\psi(\overline{x},\overline{y}),X\right)\,\leq\,0\,\leq\,G\left(\overline{y},-D_y\psi(\overline{x},\overline{y}),Y\right),
\]
and the matrix inequality
\begin{equation*}
-\left(\frac{1}{\varepsilon}+\|A\|\right)I\,\leq\,
\left(
\begin{array}{ccc}
X   & 0 \\
0  &-Y 
\end{array}
\right)
\leq 
A+\varepsilon A^2
\end{equation*}
holds true, where $A=D^2\psi\left(\overline{x},\overline{y}\right)$.
\end{proposition}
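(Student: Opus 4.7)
The plan is to deduce Proposition \ref{lemma_ishiilions} from the classical Crandall--Ishii \emph{Theorem of Sums}, which is the standard engine for producing matching sub/superjets at a doubling-of-variables maximum point. The trick is to rewrite the functional $v-\psi$ as a sum of two upper semicontinuous functions in separated variables, so that the hypotheses of the Theorem of Sums are met verbatim.

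First, I would set $u_1(x):=u(x)$ and $u_2(y):=-u(y)$, both of which are continuous (hence USC), and observe that
\[
	v(x,y)-\psi(x,y) \,=\, u_1(x)+u_2(y)-\psi(x,y),
\]
so $(\bar{x},\bar{y})$ is a local maximum of $u_1(x)+u_2(y)-\psi(x,y)$. Invoking the Theorem of Sums (Theorem 3.2 in the Crandall--Ishii--Lions \emph{User's Guide}) with $\mu=\varepsilon$, one obtains, for every $\varepsilon>0$, matrices $X$ and $Y'$ in $\mathcal{S}(d)$ such that
\[
	\bigl(D_x\psi(\bar{x},\bar{y}),X\bigr)\in\overline{J}^{2,+}u_1(\bar{x}),\qquad \bigl(D_y\psi(\bar{x},\bar{y}),Y'\bigr)\in\overline{J}^{2,+}u_2(\bar{y}),
\]
together with the block matrix inequality
\[
	-\left(\tfrac{1}{\varepsilon}+\|A\|\right)I \,\leq\, \begin{pmatrix} X & 0 \\ 0 & Y' \end{pmatrix} \,\leq\, A+\varepsilon A^2,
\]
where $A=D^2\psi(\bar{x},\bar{y})$.

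The second step is a sign translation. Because $u_2=-u$, one has the elementary identity $\overline{J}^{2,+}(-u)(\bar{y})=-\overline{J}^{2,-}u(\bar{y})$, so setting $Y:=-Y'$ yields $(-D_y\psi(\bar{x},\bar{y}),Y)\in\overline{J}^{2,-}u(\bar{y})$. Substituting $Y'=-Y$ in the matrix inequality recasts it in the form stated in the proposition. Finally, the definition of a viscosity solution, applied respectively to the subsolution property of $u$ at $\bar{x}$ and to the supersolution property of $u$ at $\bar{y}$, yields
\[
	G\bigl(\bar{x},D_x\psi(\bar{x},\bar{y}),X\bigr)\le 0 \quad\text{and}\quad G\bigl(\bar{y},-D_y\psi(\bar{x},\bar{y}),Y\bigr)\ge 0,
\]
which is the claimed pair of inequalities.

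The bulk of the argument is therefore a direct invocation of the Theorem of Sums, and the only point that requires genuine care is the third step: keeping the sign bookkeeping clean when converting superjets of $-u$ into subjets of $u$, and verifying that the replacement $Y'\mapsto-Y$ preserves the two-sided matrix bound (the lower bound on $Y'$ translates into an upper bound on $-Y=-Y'$, and vice versa, but the symmetric form $-CI\le \mathrm{diag}(X,-Y)\le A+\varepsilon A^2$ is exactly what results). No further analytic work beyond this is needed, since uniform ellipticity of $G$ plays no role in the derivation of the lemma itself --- it is only invoked later, when the lemma is applied to extract quantitative estimates from these viscosity inequalities.
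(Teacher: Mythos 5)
Your proposal is correct and follows exactly the route the paper intends: the paper offers no proof of its own but simply cites \cite[Theorem 3.2]{crandall1992user} (the Theorem of Sums), and your argument is the standard reduction to that theorem via the decomposition $u(x)+(-u(y))-\psi(x,y)$, the identity $\overline{J}^{2,+}(-u)(\bar{y})=-\overline{J}^{2,-}u(\bar{y})$, and the sub/supersolution properties of $u$. The sign bookkeeping in the substitution $Y'=-Y$ is handled correctly, so nothing is missing.
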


For a proof of Proposition \ref{lemma_ishiilions}, we refer the reader to \cite[Theorem 3.2]{crandall1992user}. In our concrete case, the operator $G$ takes the form
\[
	G(x,p,M):=|p|^{\theta(x)}F(M)-f(x).
\]
In the next section we study the compactness of the solutions to 
\[
	\left|q\,+\,Du\right|^{\theta(x)}F(D^2u)\,=\,f\;\;\;\;\;\mbox{in}\;\;\;\;\;B_1.
\]
Clearly, the case $q\equiv 0$ accounts for \eqref{mainequation}. These results will be of the utmost relevance in a series of sequential arguments to appear further in the paper. 

To conclude this section, we examine the smallness regime imposed on the problem. Namely, we explicitly verify that it is possible to suppose
\begin{equation}\label{eq_smallnessregime}
	\left\|u\right\|_{L^\infty(B_1)}\,\leq\,1,\;\;\;\;\;\;\;\;\;\;\mbox{and}\;\;\;\;\;\;\;\;\;\;\left\|f\right\|_{L^\infty (B_1)}\,<\,\varepsilon,
\end{equation}
for some $0<\varepsilon\ll 1$ without loss of generality.

\subsection{Smallness regime}\label{subsec_small}

Here we explore the structure of \eqref{mainequation}. In particular, we examine its scaling properties that allow us to work under the conditions in \eqref{eq_smallnessregime}. {Suppose that $u$ solves the equation \eqref{mainequation} and consider $v:B_1\to\mathbb{R}$ defined as
\[
	v(x)\,:=\,\frac{u(\tau x\,+\,x_0)}{K},
\]
where $x_0\in B_1$ is fixed and $\tau$ and $K$ are positive constants to be determined. Easily one checks that $v$ solves
\[
	\left|Dv\right|^{\overline{\theta}(x)}\overline{F}(D^2v)\,=\,\overline{f}\;\;\;\;\;\mbox{in}\;\;\;\;\;B_1,
\]
where
\[
	\overline{\theta}(x)\,:=\,\theta(\tau x\,+\,x_0),
\]
\[
	\overline{F}(M)\,=\,\frac{\tau^2}{K}F\left(\frac{K}{\tau^2}M\right)
\]
and
\[
	\overline{f}(x)\,=\,\frac{\tau^{2+\overline{\theta}(x)}}{K^{1+\overline{\theta}(x)}}f(\tau x\,+\,x_0).
\]
Note $\overline{F}$ is still a $(\lambda,\Lambda)$-elliptic operator. In addition, as $\theta>-1$, it follows that $\overline{f}\in L^\infty(B_1)$.  By setting
\[
	K\,:=\,1+\left\|u\right\|_{L^\infty(B_1)}\,+\,\left\|f\right\|^{\frac{1}{1+\inf_{B_1}\theta}}_{L^\infty(B_1)}
\]
and
\[
	\tau\; := \varepsilon^{\frac{1}{2+\inf_{B_1} \theta}},
\]
we ensure that $v$ solves an equation in the same class as \eqref{mainequation}, and it is under the smallness regime prescribed in \eqref{eq_smallnessregime}. Estimates proven for $v$ gets transported to $u$ by factors that depend explicitly on $K$ and $\tau$.

\section{Compactness for a family of degenerate PDEs}\label{sec_compactness}

In this section we obtain local H\"older continuity estimates for viscosity solutions to \eqref{mainequation}. Such estimates yield compactness with respect to uniform convergence to  a  large class of functions related to the equation we propose to study. Those levels of compactness unlock the geometrical structure along which we transport regularity properties from the homogeneuous problem 
\[
   F(D^2u)=0 \ \ \ \mbox{in} \ \ \ B_1
\]
to the solutions of \eqref{mainequation}. The reasoning employed in this subsection is inspired by the methods put forward in \cite{Imb-Silv1}, though it requires extra care.


\begin{lemma}[H\"older continuity of the solutions]\label{compactness_large}
Let $u\in {C}(B_1)$ be a normalized viscosity solution to
\begin{equation*}
	|q+Du|^{\theta(x)}F(D^2u)=f(x) \,\,\,\, \mbox{in} \,\,\,\ B_1,
\end{equation*}
where $q\in\mathbb{R}^d$ is an arbitrary vector. Suppose A\ref{assump_operatorF} and A\ref{assump_theta1} hold. Then $u\in {C}^\beta_{loc}(B_1)$, for every $\beta\in(0,1)$. Moreover, there is $C>0$, depending only on dimension, ellipticity constants and $\beta$,  such that
\[
	\left\|u\right\|_{{C}^\beta(B_{1/2})}\leq C\left(1+\left\|u\right\|_{L^\infty(B_1)}\,+\,\left\|f\right\|^{1/(1\,+\,\inf\theta(x))}_{L^\infty(B_1)}\right).
\]
\end{lemma}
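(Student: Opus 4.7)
The plan is to establish the H\"older estimate by the classical doubling-of-variables method combined with the Ishii-Lions lemma (Proposition \ref{lemma_ishiilions}). After invoking the normalization of Section \ref{subsec_small}, fix an arbitrary base point $x_0 \in B_{1/2}$ and a target exponent $\beta \in (0,1)$. I would introduce the auxiliary function
\[
\Phi(x,y) \,:=\, u(x) - u(y) - L_1\,|x-y|^\beta - L_2\bigl(|x-x_0|^2 + |y-x_0|^2\bigr),
\]
with $L_1, L_2>0$ to be chosen later, and argue that for an appropriate such choice one has $\Phi \le 0$ on $\overline{B_{3/4}} \times \overline{B_{3/4}}$; this immediately yields $|u(x)-u(x_0)| \le L_1|x-x_0|^\beta$ and delivers the sought estimate. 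Arguing by contradiction, if $\sup \Phi > 0$ then for $L_2$ sufficiently large the supremum is attained at an interior point $(\bar x, \bar y)$; moreover $\bar x \neq \bar y$, since $\Phi(x,x) = -2L_2|x-x_0|^2 \le 0$.

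Next, apply Proposition \ref{lemma_ishiilions} with $\psi$ equal to the penalty part of $\Phi$. Denoting $\rho := |\bar x - \bar y|$ and $\xi := (\bar x - \bar y)/\rho$, set
\[
p_1 \,:=\, L_1\beta \rho^{\beta-1}\,\xi + 2L_2(\bar x - x_0), \qquad p_2 \,:=\, L_1\beta\rho^{\beta-1}\,\xi - 2L_2(\bar y - x_0),
\]
and obtain matrices $X,Y\in \mathcal{S}(d)$ satisfying the viscosity inequalities
\[
|q+p_1|^{\theta(\bar x)} F(X) \le f(\bar x), \qquad |q+p_2|^{\theta(\bar y)} F(Y) \ge f(\bar y),
\]
together with the matrix inequality of Proposition \ref{lemma_ishiilions}. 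A direct spectral computation on the Hessian of $|x-y|^\beta$ shows that $X - Y$ has one very negative eigenvalue of size $-c L_1 \rho^{\beta-2}$ along $\xi$ while the remaining eigenvalues are nonpositive, so uniform ellipticity (through \eqref{eq_ellipticpucci}) gives the crucial bound $F(X) - F(Y) \le \mathcal{M}^+_{\lambda,\Lambda}(X-Y) \le -c\lambda L_1 \rho^{\beta-2}$, which can be made arbitrarily negative by taking $L_1$ large.

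The final step is a case analysis driven by the size of $|q|$ relative to the coercive gradient $r := L_1\beta \rho^{\beta-1}$. In the regime where $r$ dominates, $|q+p_i| \sim r$ is large for both $i=1,2$; in the regime where $q$ dominates, $|q+p_i| \sim |q|$, because $|p_1 - p_2| \le 4L_2$ is negligible compared to $|q|$. In either regime the two gradient magnitudes $|q+p_1|$ and $|q+p_2|$ are comparable, and dividing the viscosity inequalities by $|q+p_i|^{\theta(\bar x_i)}$ reduces the equation to a uniformly elliptic inequality with bounded right-hand side; combined with the Pucci bound this contradicts the assumed positivity of $\sup \Phi$ provided $L_1$ is taken large depending on $d,\lambda,\Lambda,\beta,\|f\|_\infty$ and $\|\theta\|_\infty$. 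I expect the main obstacle to be the transition region in which $q$ nearly cancels $r\xi$, so that $|q+p_i|$ may approach zero and the singular branch $\theta^-$ threatens to blow the equation up; the remedy is to exploit the comparability $|p_1-p_2|\le 4L_2$ together with the $L^\infty$ bound on $\theta$ to keep the ratio $|q+p_1|^{\theta(\bar x)}/|q+p_2|^{\theta(\bar y)}$ under control, and to tune $L_2$ so that $|q + p_i|$ is never forced into a scale where this ratio degenerates.
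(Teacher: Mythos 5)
Your sketch follows essentially the same doubling-of-variables strategy as the paper: penalize with $L_1|x-y|^\beta + L_2(|x-x_0|^2+|y-x_0|^2)$, invoke Ishii--Lions, extract one very negative eigenvalue of $X-Y$, and combine Pucci bounds with the two viscosity inequalities to contradict the choice of a large $L_1$. The spectral computation and the Pucci estimate you write (using $\mathcal{M}^+_{\lambda,\Lambda}(X-Y)$) are correct and equivalent to the paper's. The high-level case split on the size of $|q|$ is likewise the same one the paper uses.

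There are, however, two gaps. First, you organize the dichotomy around the contact-dependent quantity $r = L_1\beta\rho^{\beta-1}$ and then have to worry about the "transition region'' where $q$ nearly cancels $r\xi$. The paper avoids this transition entirely: since $\bar x, \bar y \in B_r$ with $r<1$, one has $\rho := |\bar x - \bar y| \le 1$ and hence $\rho^{\beta-1}\ge 1$, so the coercive gradient term $L_1\beta\rho^{\beta-1}\ge L_1\beta$ is a fixed, $\rho$-independent lower bound. Therefore, splitting by $|q| \le A_0$ versus $|q| > A_0$ with $A_0 = O(L_1)$ and $A_0 < L_1\beta - 2L_2$ guarantees in the small-$|q|$ regime that $|q+p_i| \ge L_1\beta - 2L_2 - A_0 > 0$ regardless of $\rho$, with no near-cancellation possible. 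Your remedy for the transition (controlling a ratio $|q+p_1|^{\theta(\bar x)}/|q+p_2|^{\theta(\bar y)}$) is not precise enough to substitute for this; as stated it does not produce the needed contradiction because the term $f(\bar x)|q+p_1|^{-\theta(\bar x)}$ is estimated individually, not only through such a ratio.

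Second, you identify the danger when $|q+p_i|\to 0$ as coming from the singular branch $\theta^- < 0$, but the opposite is true. If $\theta(\bar x) < 0$, then $|q+p_1|^{-\theta(\bar x)} = |q+p_1|^{|\theta(\bar x)|} \to 0$ as $|q+p_1|\to 0$, so the source term is tame; the danger there is large $|q+p_1|$, which is why the paper bounds $|q+q_{\bar x}| \le L_1\beta\rho^{\beta-1}+2L_2+|q|$, absorbs the $\rho^{\beta-1}$-growth into the factor $\rho^{2-\beta}$ from the master inequality, and then closes with Young's inequality to produce the exponent $1/(1+\theta)$. The actually problematic branch at small $|q+p_i|$ is the degenerate one $\theta^+ > 0$, where $|q+p_i|^{-\theta(\bar x)}$ blows up; the paper handles that by the uniform lower bound on $|q+p_i|$ just described. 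This four-way case analysis on the signs of $\theta(\bar x)$ and $\theta(\bar y)$ — degenerate/degenerate, singular/singular, and the two mixed cases — is where the substantive work of the lemma resides, and your proposal elides it. Without making these distinctions explicit and using both the $\rho^{\beta-1}\ge 1$ normalization (degenerate side) and the $\rho^{2-\beta}$ absorption plus Young's inequality (singular side), the estimate and in particular the appearance of $\|f\|^{1/(1+\inf\theta)}_{L^\infty}$ in the conclusion cannot be obtained.
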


\begin{proof}
To prove the local H\"older regularity of $u$ let us fix $0<r<1$ and verify that there exists positive numbers $L_1$ and $L_2$ such that
\begin{equation}\label{eq_nespresso}
	\mathfrak{L}:=\displaystyle\sup_{x,y\in B_r}\left(u(x)-u(y)-L_1|x-y|^{\beta}-L_2(|x-x_0|^2+|y-x_0|^2)\right)\leq 0
\end{equation}
 for every $x_0\in B_{1/2}$. As it is usual, we argue by contradiction. That is, we suppose there exists $x_0\in B_{1/2}$ for which $\mathcal{L}>0$, for every $L_1>0$ and $L_2>0$.
 
We introduce two auxiliary functions $\phi,\,\psi:\overline{B_r}\times\overline{B_r}\to\mathbb{R}$, to be defined as
 \[
 	\psi(x,y)\,:=\, L_1\left|x\,-\,y\right|^\beta\,+\,L_2\left(\left|x\,-\,x_0\right|^2\,+\,\left|y\,-\,x_0\right|^2\right)
 \]
 and
 \[
 	\phi(x,y)\,:=\,u(x)\,-\,u(y)\,-\,\psi(x,y).
 \]
 We denote by $(\overline{x},\overline{y})$ a maximum point for $\phi$; i.e., 
 \[
 	\phi(\overline{x},\overline{y})\,=\,\mathcal{L}\,>\,0
 \]
 and
 \[
 	 L_1\left|\overline{x}\,-\,\overline{y}\right|^\beta\,+\,L_2\left(\left|\overline{x}\,-\,x_0\right|^2\,+\,\left|\overline{y}\,-\,x_0\right|^2\right)\,\leq\,\osc_{B_1}u\,\leq\,2
 \]
 Before we proceed, we set
 \[
 	L_2\,:=\,\left(\frac{4\sqrt{2}}{r}\right)^2.
 \]
 The former choice of $L_2$ implies  
 \[
 	\left|\overline{x}\,-\,x_0\right|\,+\,\left|\overline{y}\,-\,x_0\right|\,\leq\,\frac{r}{2}.
 \]
Hence, we conclude $\overline{x}$ and $\overline{y}$ are in $B_r$. In addition, $\overline{x}\neq \overline{y}$; otherwise, $\mathcal{L}\leq 0$ trivially.
 
 For ease of presentation, we split the proof in three steps. First, the Ishii-Lions Lemma builds upon \eqref{eq_nespresso} to produce a viscosity inequality. 
 
 \bigskip
 
\noindent{\bf Step 1} We start off by invoking the Ishii-Lions Lemma (see Proposition \ref{lemma_ishiilions}) as to assure the existence of a limiting sub-jet $(q_{\bar x},X)$ of $u$ at $\bar x$ and a limiting super-jet $(q_{\bar y},Y)$ 
of $u$ at $\bar y,$ where
\[
  q_{\bar x}:= D_x\psi(\bar x,\bar y)=L_1\beta|\bar x-\bar y|^{\beta-2}(\bar x-\bar y)+2L_2(\bar x-x_0)
\]
and
\[
  q_{\bar y}:= -D_y\psi(\bar x,\bar y)=L_1\beta|\bar x-\bar y|^{\beta-2}(\bar x-\bar y)-2L_2(\bar y-x_0)
\]
such that the matrices $X$ and $Y$ verify the inequality
\begin{equation}\label{matrix_inequality1}
  \left(
  \begin{array}{ccc}
  X   & 0 \\
  0  &-Y 
  \end{array}
  \right)
  \leq 
  \left(
  \begin{array}{ccc}
  Z  & -Z \\
  -Z  & Z
  \end{array}
  \right)+(2L_2+\iota)I,
\end{equation}
for 
\[
	Z:= L_1\beta|\bar x-\bar y|^{\beta-4}\left(|\bar x-\bar y|^2I-(2-\beta)(\bar x-\bar y)\otimes(\bar x-\bar y)\right)
\]
where $0<\iota$ only depends on the norm of $Z$ and can be made sufficiently small. 

For vectors of the form $(z,z)\in\mathbb{R}^{2d}$, we apply the matrix inequality \eqref{matrix_inequality1} to obtain
\[
  \langle(X-Y)z,z\rangle\leq(4L_2+2\iota)|z|^2.
\]
We conclude that all the eigenvalues of $(X-Y)$ are below $4L_2+2\iota.$ On the other hand, applying \eqref{matrix_inequality1} to the particular vector 
\[
  \bar{z}:=\left(\frac{\bar x-\bar y}{|\bar x-\bar y|},\frac{\bar y-\bar x}{|\bar x-\bar y|}\right),
\]
we get
\[
  \left\langle(X-Y)\frac{\bar x-\bar y}{|\bar x-\bar y|},\frac{\bar x-\bar y}{|\bar x-\bar y|}\right\rangle\leq(4L_2+2\iota-4L_1\beta(1-\beta)|\bar x-\bar y|^{\beta-2})\left|\frac{\bar x-\bar y}{|\bar x-\bar y|}\right|^2.
\]
Then, at least one eigenvalue of $(X-Y)$ is below $4L_2+2\iota-4L_1\beta(1-\beta)|\bar x-\bar y|^{\beta-2}$. This quantity will be negative for large values of $L_1.$ By the definition of the minimal Pucci operator we get
\[
  \mathcal{M}^{-}_{\lambda,\Lambda}(X-Y)\geq 4\lambda L_1\beta(1-\beta)|\bar x-\bar y|^{\beta-2}-(\lambda+(d-1)\Lambda)(4L_2+2\iota).
\]

From the two viscosity inequalities
\[
  |q+q_{\bar x}|^{\theta(\bar x)}F(X)\leq f(\bar x)
\]
and 
\[
  |q+q_{\bar y}|^{\theta(\bar y)}F(Y)\geq f(\bar y),
\]
and from the uniformly ellipticity
\[
  F(X)\geq F(Y)+\mathcal{M}^{-}_{\lambda,\Lambda}(X-Y),
\]
we obtain
\begin{equation}\label{eq_masterineq}
\begin{array}{lll}
  4\lambda L_1\beta(1-\beta)&\leq& (\lambda+(d-1)\Lambda)(4L_2+2\iota)|\bar x-\bar y|^{2-\beta}\\
  &+& f(\bar x)|\bar x-\bar y|^{2-\beta}|q+q_{\bar x}|^{-\theta(\bar x)}\\
  &-& f(\bar y)|\bar x-\bar y|^{2-\beta}|q+q_{\bar y}|^{-\theta(\bar y)}.
\end{array}
\end{equation}

In what follows we shall distinguish two cases. First we consider $\left|q\right|<A_0$, where $A_0>0$ is a constant to be determined later in the proof; in Step 3 we consider the complementar case.

\bigskip

\noindent{\bf Step 2}  Suppose $\left|q\right|\,\leq\,A_0$. Notice that, if $\theta(\bar x)>0$ then
\begin{equation*}
\begin{aligned}
  f(\bar x)|\bar x-\bar y|^{2-\beta}|q+q_{\bar x}|^{-\theta(\bar x)}&\leq \frac{\|f\|_{L^{\infty}(B_1)}|\bar x-\bar y|^{2-\beta}}{(L_1\beta|\bar x-\bar y|^{\beta-1}-2L_2|\bar x-x_0|-|q|)^{\theta(\bar x)}}\\
  &\leq \frac{\|f\|_{L^{\infty}(B_1)}|\bar x-\bar y|^{2-\beta+(1-\beta)\theta(\bar x)}}{(L_1\beta-2L_2|\bar x-x_0||\bar x-\bar y|^{1-\beta}-A_0|\bar x-\bar y|^{1-\beta})^{\theta(\bar x)}}\\
  &\leq \frac{\|f\|_{L^{\infty}(B_1)}}{(L_1\beta-2L_2-A_0)^{\theta(\bar x)}}
\end{aligned}
\end{equation*}
since $|\bar x-\bar y|$ and $|\bar x-x_0|$ are uniformly bounded by $1$, $(2-\beta+(1-\beta)\theta(\bar x))\geq 0$ and by choosing $L_1$ large enough the term $L_1\beta-2L_2-A_0$ is positive.

Similarly, if $\theta(\bar y)>0$ we obtain
\[
  -f(\bar y)|\bar x-\bar y|^{2-\beta}|q+q_{\bar y}|^{-\theta(\bar y)}\leq \frac{\|f\|_{L^{\infty}(B_1)}}{(L_1\beta-2L_2-A_0)^{\theta(\bar y)}}.
\]

Therefore, if both $\theta(\bar x)$ and $\theta(\bar y)$ are positive we have that
\begin{align}\label{contradiction2_case1}
  4\lambda L_1\beta(1-\beta)&\leq (\lambda+(d-1)\Lambda)(4L_2+2\iota)+\frac{\|f\|_{L^{\infty}(B_1)}}{(L_1\beta-2L_2-A_0)^{\theta(\bar x)}}\\\nonumber&\quad+\frac{\|f\|_{L^{\infty}(B_1)}}{(L_1\beta-2L_2-A_0)^{\theta(\bar y)}}.
\end{align}

On the other hand, if $-1<\theta(\bar x)\leq 0$, the Young's inequality yields
\begin{equation*}
\begin{aligned}
  &f(\bar x)|\bar x-\bar y|^{2-\beta}|q+q_{\bar x}|^{-\theta(\bar x)}\leq \\
  &\leq \|f\|_{L^{\infty}(B_1)}|\bar x-\bar y|^{2-\beta}(L_1\beta|\bar x-\bar y|^{\beta-1}+2L_2|\bar x-x_0|+|q|)^{-\theta(\bar x)}\\
  &\leq\|f\|_{L^{\infty}(B_1)}|\bar x-\bar y|^{2-\beta+(\beta-1)\theta(\bar x)}(L_1\beta+(2L_2|\bar x-x_0|+A_0)|\bar x-\bar y|^{1-\beta})^{-\theta(\bar x)}\\
  &\leq \|f\|_{L^{\infty}(B_1)}2^{-\theta(\bar x)}((L_1\beta)^{-\theta(\bar x)}+(2L_2+A_0)^{-\theta(\bar x)})\\
  &\leq \lambda\beta(1-\beta)L_1+C(\lambda,\beta)\|f\|_{L^{\infty}(B_1)}^{1/(1+\theta(\bar x))}+\|f\|_{L^{\infty}(B_1)}(4L_2+2A_0)^{-\theta(\bar x)},
\end{aligned}
\end{equation*}
since $|\bar x-\bar y|$ and $|\bar x-x_0|$ are uniformly bounded by $1$ and 
\[
	(2-\beta+(1-\beta)\theta(\bar x))\geq0.
\]
Similarly, if $-1<\theta(\bar y)\leq 0$
\begin{equation*}
\begin{aligned}
  &-f(\bar y)|\bar x-\bar y|^{2-\beta}|q+q_{\bar y}|^{-\theta(\bar y)}\\
  &\leq \lambda\beta(1-\beta)L_1+C(\lambda,\beta)\|f\|_{L^{\infty}(B_1)}^{1/(1+\theta(\bar y))}+\|f\|_{L^{\infty}(B_1)}(4L_2+2A_0)^{-\theta(\bar y)}.
\end{aligned}
\end{equation*}
Therefore, if both $\theta(\bar x)$ and $\theta(\bar y)$ belongs to $(-1,0]$, we find that
\begin{equation}\label{contradiction2_case2}
\begin{aligned}
  2\lambda L_1\beta(1-\beta)\leq& (\lambda+(d-1)\Lambda)(4L_2+2\iota)\\&+C(\lambda,\beta)(\|f\|_{L^{\infty}(B_1)}^{1/(1+\theta(\bar x))}+\|f\|_{L^{\infty}(B_1)}^{1/(1+\theta(\bar y))})\\
  &+\|f\|_{L^{\infty}(B_1)}((4L_2+2A_0)^{-\theta(\bar x)}+(4L_2+2A_0)^{-\theta(\bar y)}).
\end{aligned}
\end{equation}

Finally, if $\theta(\bar x)>0$ and $-1<\theta(\bar y)\leq 0$ we obtain
\begin{equation}\label{contradiction2_case3}
\begin{aligned}
  3\lambda L_1\beta(1-\beta)\leq& (\lambda+(d-1)\Lambda)(4L_2+2\iota)+\frac{\|f\|_{L^{\infty}(B_1)}}{(L_1\beta-2L_2-A_0)^{\theta(\bar x)}}\\
  &+ C(\lambda,\beta)\|f\|_{L^{\infty}(B_1)}^{1/(1+\theta(\bar y))}+\|f\|_{L^{\infty}(B_1)}(4L_2+2A_0)^{-\theta(\bar y)},
\end{aligned}
\end{equation}
and if $-1<\theta(\bar x)\leq 0$ and  $\theta(\bar y)>0$ we obtain 
\begin{equation}\label{contradiction2_case4}
\begin{aligned}
  3\lambda L_1\beta(1-\beta)\leq& (\lambda+(d-1)\Lambda)(4L_2+2\iota)+ C(\lambda,\beta)\|f\|_{L^{\infty}(B_1)}^{1/(1+\theta(\bar x))}\\
  &+\|f\|_{L^{\infty}(B_1)}(4L_2+2A_0)^{-\theta(\bar x)}
  +\frac{\|f\|_{L^{\infty}(B_1)}}{(L_1\beta-2L_2-A_0)^{\theta(\bar y)}}.
\end{aligned}
\end{equation}

In all cases, by choosing $L_1$ sufficiently large, \eqref{contradiction2_case1}, \eqref{contradiction2_case2},  \eqref{contradiction2_case3},  \eqref{contradiction2_case4} produce a contradiction and the lemma is proved for $\left|q\right|\,\leq\,A_0$. 
\bigskip

\noindent{\bf Step 3} Suppose $\left|q\right|>A_0$. Because $0<2-\beta<2$, \eqref{eq_masterineq} becomes
\begin{equation*}\label{eq_masterineq2}
\begin{aligned}
  4\lambda L_1\beta(1-\beta)\leq& (\lambda+(d-1)\Lambda)(4L_2+2\iota)+ f(\bar x)|q+q_{\bar x}|^{-\theta(\bar x)}\\&-f(\bar y)|q+q_{\bar y}|^{-\theta(\bar y)}.
\end{aligned}
\end{equation*}

Choose $A_0=O(L_1)$. Therefore, we can estimate 
\[
  f(\bar x)|q+q_{\bar x}|^{-\theta(x)}\leq C\frac{\|f\|_{L^\infty(B_1)}}{A_0^{(\inf_{B_1}\theta)}},
\]
and similarly,
\[
  -f(\bar y)|q+q_{\bar y}|^{-\theta(y)}\leq C\frac{\|f\|_{L^\infty(B_1)}}{A_0^{(\inf_{B_1}\theta)}},
\]
where $C$ is a positive constant. Therefore
\[
  4\lambda L_1\beta(1-\beta)\leq (\lambda+(d-1)\Lambda)(4L_2+2\iota) + C\frac{\|f\|_{L^\infty(B_1)}}{A_0^{(\inf_{B_1}\theta)}}.
\]
By choosing $L_1$ large enough, depending on $\lambda, \Lambda, d, \displaystyle\inf_{B_1}\theta$ and $L_2$ (which in turn depends only on $r$), we obtain a contradiction and the result follows.
\end{proof}

In the sequel, we develop one of the main ingredients in the realm of regularity transmission by approximation methods. Namely, a tangential path.

\section{Tangential path}\label{sct TP}

This section is solely dedicated to the proof of a key Approximation Lemma, which plays a paramount role in our forthcoming geometric argument.

\begin{lemma}[Approximation Lemma]\label{GTA-path}
Let $u\in{C}(B_1)$ be a normalized viscosity solution to
\[
	|q+Du|^{\theta(x)}F(D^2u)=f \,\,\,\, \mbox{in} \,\,\,\ B_1,
\]
where $q\in\mathbb{R}^d$ is arbitrary. Suppose A\ref{assump_operatorF}-A\ref{assump_theta1} hold. Given $\delta>0$, there exists $\varepsilon>0,$ depending only on $d, \lambda, \Lambda$ and $\sup \theta,$ such that, if
\[
	\|f[1+|q|]^{-\theta(\cdot)} \|_{L^\infty(B_1)}\leq\varepsilon,
\]
one can find $h\in {C}^{1,\bar{\alpha}}(B_{3/4})$, for some $0<\bar{\alpha}<1$, satisfying
\[
	\|u-h\|_{L^\infty(B_{1/2})}\leq\delta.
\]
Furthermore, $\|h\|_{C^{1,\bar{\alpha}}(B_{3/4})} \le C$, where $C$ depends only on $d, \lambda, \Lambda$.
\end{lemma}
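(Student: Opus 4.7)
The plan is a contradiction--compactness scheme of stability type for degenerate fully nonlinear equations. Suppose the lemma fails; then there exist $\delta_0>0$ and sequences $(u_n,F_n,\theta_n,q_n,f_n)$ verifying the hypotheses with $\varepsilon_n:=\|f_n(1+|q_n|)^{-\theta_n(\cdot)}\|_{L^\infty(B_1)}\to 0$, yet $\|u_n-h\|_{L^\infty(B_{1/2})}>\delta_0$ for every $h\in C^{1,\bar{\alpha}}(B_{3/4})$ with $\|h\|_{C^{1,\bar{\alpha}}(B_{3/4})}\leq C_0$, where $C_0$ is the universal Krylov--Safonov constant recalled in the introduction.

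\textbf{Compactness.} The first step is to invoke Lemma~\ref{compactness_large} to produce a uniform $C^{\beta}(B_{3/4})$ estimate for $(u_n)$. A mild technical point is that the estimate there is stated through $\|f\|_{L^\infty}$, which may blow up when $|q_n|\to\infty$; however, revisiting Step~3 of the proof of that lemma, one sees that the weighted norm $\|f(1+|q|)^{-\theta(\cdot)}\|_{L^\infty}$ --- which is precisely $\varepsilon_n$ in our setting --- suffices to close the contradiction scheme. By Arzel\`a--Ascoli, along a subsequence $u_n\to u_\infty$ uniformly in $B_{3/4}$. Since each $F_n$ is $\Lambda$-Lipschitz on $\mathcal S(d)$ and satisfies $F_n(0)=0$, a further subsequence yields $F_n\to F_\infty$ locally uniformly on $\mathcal S(d)$, with $F_\infty$ again $(\lambda,\Lambda)$-elliptic.

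\textbf{Stability -- the main obstacle.} The core difficulty is to show $u_\infty$ is a viscosity solution of $F_\infty(D^2 u_\infty)=0$ in $B_{3/4}$, despite the absence of any continuity hypothesis on the $\theta_n$ and the possibility that either $|q_n|\to\infty$ or $q_\infty+D\phi(x_0)=0$. Let $\phi\in C^2$ touch $u_\infty$ strictly from above at $x_0\in B_{3/4}$ (one may even assume a quadratic strict touching by adding $\eta|x-x_0|^2$). Uniform convergence produces $x_n\to x_0$ with $u_n-\phi$ attaining a local maximum at $x_n$, so
\[
|q_n+D\phi(x_n)|^{\theta_n(x_n)}F_n(D^2\phi(x_n))\leq f_n(x_n).
\]
I would split into two regimes. (i) If $(q_n)$ stays bounded, extract $q_n\to q_\infty$ and $\theta_n(x_n)\to \theta^*\in [-\|\theta^-\|_\infty,\|\theta^+\|_\infty]$. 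When $q_\infty+D\phi(x_0)\neq 0$, the prefactor is bounded away from $0$ and $\infty$, and direct passage yields $F_\infty(D^2\phi(x_0))\leq 0$. When $q_\infty+D\phi(x_0)=0$, replace $\phi$ by the linearly perturbed test function $\phi_\tau(x):=\phi(x)+\tau v\cdot(x-x_0)$ for a suitably chosen unit vector $v$ and small $\tau>0$. The max point $x_0^\tau$ of $u_\infty-\phi_\tau$ satisfies $|x_0^\tau-x_0|=O(\tau)$ (thanks to the quadratic strict touching), and for generic $v$ one checks $|q_\infty+D\phi_\tau(x_0^\tau)|\gtrsim \tau>0$; applying the previous step to $\phi_\tau$ and then sending $\tau\to 0$ (using continuity of $F_\infty$) yields $F_\infty(D^2\phi(x_0))\leq 0$. (ii) If $|q_n|\to\infty$, then $|q_n+D\phi(x_n)|\sim |q_n|$ and the ratio $(1+|q_n|)/|q_n+D\phi(x_n)|$ is bounded uniformly; hence
\[
F_n(D^2\phi(x_n))\leq \frac{f_n(x_n)}{(1+|q_n|)^{\theta_n(x_n)}}\Big(\frac{1+|q_n|}{|q_n+D\phi(x_n)|}\Big)^{\theta_n(x_n)}\leq C\,\varepsilon_n\to 0,
\]
where $C$ depends only on $\sup|\theta|$. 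In the limit, $F_\infty(D^2\phi(x_0))\leq 0$. The supersolution test is entirely symmetric.

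\textbf{Conclusion.} Once $u_\infty$ is known to solve $F_\infty(D^2 u_\infty)=0$ with $\|u_\infty\|_{L^\infty}\leq 1$, the Krylov--Safonov estimate recalled in the introduction yields $u_\infty\in C^{1,\bar{\alpha}}(B_{3/4})$ with $\|u_\infty\|_{C^{1,\bar{\alpha}}(B_{3/4})}\leq C_0$. Choosing $h:=u_\infty$ then contradicts $\|u_n-h\|_{L^\infty(B_{1/2})}>\delta_0$ for $n$ sufficiently large, completing the proof. I expect the most delicate step to be the stability passage in the degenerate subcase $q_\infty+D\phi(x_0)=0$, where the lack of continuity of $\theta_n$ and the vanishing prefactor must be simultaneously controlled through the perturbation $\phi_\tau$.
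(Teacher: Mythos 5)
Your overall contradiction--compactness scheme matches the paper's, the handling of $|q_n|\to\infty$ (where $|q_n+D\phi(x_n)|\sim 1+|q_n|$ and the weighted source controls the passage) is essentially the paper's $\beta>0$ branch, and your remark that Lemma~\ref{compactness_large} must be re-read with the weighted source $\|f(1+|q|)^{-\theta(\cdot)}\|_\infty$ in place of $\|f\|_\infty$ is a legitimate observation. The problem sits exactly where you flag it: the degenerate subcase $q_\infty + D\phi(x_0)=0$.

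The linear perturbation $\phi_\tau(x)=\phi(x)+\tau v\cdot(x-x_0)$ does not produce a nonvanishing gradient at the new contact point: the maximum of $u_\infty-\phi_\tau$ shifts by precisely the amount that cancels the perturbation. Concretely, take $q_\infty=0$, $x_0=0$, $u_\infty\equiv 0$ and $\phi(x)=\tfrac12\langle Mx,x\rangle$ with $M>0$. Then
\[
u_\infty-\phi_\tau = -\tfrac12\big\langle M(x+\tau M^{-1}v),\,x+\tau M^{-1}v\big\rangle + \tfrac{\tau^2}{2}\langle M^{-1}v,v\rangle,
\]
so the maximum is attained at $x_0^\tau=-\tau M^{-1}v$, where $D\phi_\tau(x_0^\tau)=Mx_0^\tau+\tau v = 0$ exactly, for every $v$ and every $\tau>0$. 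Hence $q_\infty+D\phi_\tau(x_0^\tau)=0$ and the nondegenerate case never applies; there is no ``generic $v$'' escaping this, since the cancellation is forced by the first-order condition at the max. The same cancellation persists if one perturbs $u_n-\phi$ instead and then sends $n\to\infty$.

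The paper sidesteps this with a non-smooth test function. After reducing to $q_j=0$, $b=0$ and $\operatorname{spec}(M)\not\subset(-\infty,0]$, it sets $E:=\operatorname{span}(e_1,\dots,e_k)$ of eigenvectors with positive eigenvalues and perturbs by $\kappa\sup_{e\in\mathbb S^{d-1}}\langle P_Ex,e\rangle$. At the contact point $x_j^\kappa$, either $P_Ex_j^\kappa=0$, in which case the supremum is attained for every $e$ and one may choose $e$ so the test gradient $Mx_j^\kappa+\kappa e$ has norm $\ge\kappa$; or $P_Ex_j^\kappa\ne 0$, in which case $\nu:=P_Ex_j^\kappa/|P_Ex_j^\kappa|$ gives $\langle Mx_j^\kappa+\kappa\nu,\nu\rangle=\sum_{i\le k}\lambda_i a_i^2/|P_Ex_j^\kappa|+\kappa\ge\kappa$ because only positive eigenvalues enter the inner product. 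In either branch the gradient is bounded below by $\kappa$ uniformly in $j$, which is what allows dividing out the prefactor and passing to the limit. This mechanism --- a whole sphere of admissible gradient directions at the touching point, filtered through the positive eigenspace so that no cancellation can occur --- is the essential idea your linear $\tau v$ cannot reproduce.
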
 
\begin{proof}
We argue by contradiction. Suppose the thesis of the proposition fails. Then there exist  $\delta_0>0$ and sequences $(F_j)_{j\in\mathbb{N}}$, $(\theta_j)_{j\in\mathbb{N}}$, $(f_j)_{j\in\mathbb{N}}$, $(q_j)_{j\in\mathbb{N}}$ and $(u_j)_{j\in\mathbb{N}}$ satisfying: 
\begin{itemize}
\item[i)] $F_j(\cdot) \ \ \mbox{is} \ \ (\lambda, \Lambda)-\mbox{elliptic}$;
\item[ii)] $-1 < \theta_j \le \sup \theta$;	
\item[iii)] $\|f_j[1+|q_j|]^{-\theta_j(\cdot)} \|_{L^\infty(B_1)} \leq\frac{1}{j}$;
\item[iv)] $|q_j+Du_j|^{\theta_j(x)}F_j(D^2u_j)=f_j$;
\end{itemize}	
however
\begin{equation}\label{edu-1}
	\displaystyle\sup_{B_{1/2}}|u_j-h|>\delta_0,
\end{equation}
for all $h\in {C}^{1,\bar{\alpha}}(B_1)$ and every $0<\bar{\alpha}<1.$

Uniform ellipticity implies that $F_j$ converges to some $(\lambda, \Lambda)$-uniform elliptic operator $F_\infty.$ In addition, the compactness proven in the previous section implies that $u_j$ converges to a function $u_\infty$ locally uniformly in $B_1$. The goal is to verify that the limiting function $u_\infty$ is a viscosity solution to the homogeneous equation
\begin{equation}\label{edu0}
	F_\infty(D^2 u_\infty) = 0, \quad \text{ in } B_{3/4}.
\end{equation}
For that, we initially rewrite the sequence of PDEs as
$$
	[1+|q_j|]^{\theta_j(x)} \cdot \left |\frac{q_j}{1+|q_j|} + \frac{1}{1+|q_j|} Du_j \right |^{\theta_j(x)}F_j(D^2u_j)=f_j.
$$
By local compactness of $\mathbb{R}^d$, up to a subsequence,
$$
	\eta_j := \frac{q_j}{1+|q_j|} \to \eta, \quad \text{and} \quad  \frac{1}{1+|q_j|} \to \mu,
$$
with $|\eta | \le 1$ and $0\le \mu \le 1$. By contradiction assumption (iii), 
$$
	g_j(x) := f_j[1+|q_j|]^{-\theta_j(x)} \to 0,
$$
in the $L^\infty$-topology.  Next, let 
$$
	p(x) = \frac{1}{2} \langle M(x-y), (x-y) \rangle  + b\cdot (x-y) + u(y)
$$ 
be a quadratic polynomial touching $u_\infty$ from below at a point $y \in B_{3/4}$. With no loss of generality, let us assume that $|y|= u(y) = 0$. Aiming at \eqref{edu0}, we need to establish 
$$
	F_\infty(M) \le 0.
$$
For $0<r_0 < 1$ fixed, define
$$
 p(x_j) - u_j(x_j) := \max\limits_{x\in B_r}( p(x) - u_j(x) ).
$$
From the PDE satisfied by $u_j$, we obtain:
\begin{equation}\label{edu1}
	 \left |\frac{q_j}{1+|q_j|} + \frac{1}{1+|q_j|} b \right |^{\theta_j(x_j)}F_j(M) \le f_j (x_j) \cdot [1+|q_j|]^{\theta_j(x_j)} = g_j(x_j). 
\end{equation}
Up to a subsequence, we can assume
$$
	 \left |\frac{q_j}{1+|q_j|} + \frac{1}{1+|q_j|} b \right |^{\theta_j(x_j)} \to \beta, \quad \text{ in } \mathbb{R}.
$$
If $\beta > 0$, then, letting $j \to \infty$ in \eqref{edu1}, we deduce, 
$$
	F_\infty(M) \le 0,
$$
as desired. The complementar case is when $b=0$ and $q_j \to 0$. From now one, we restrict to that scenario. Initially we note that assumption (iv) yields
$$
	|q_j|^{\theta_j(x_j)} F_j(M) \le f_j(x_j).
$$
If we can find subsequence $q_{j_k} \not = 0$, we reach the same conclusion, as from assumption (ii), along with the information $q_j \to 0$, we know $\|f_j |q_j|^{-\theta_j(x_j)} \|_\infty = o(1)$. 

We are left to analyze the case $b=0 = q_j$. One further reduction: if $\text{spec}(M)\subset (-\infty, 0]$, then, by ellipticity, we immediately deduce $F_\infty(M) \le 0$.  Thus, we can assume the invariant space $E:=\text{span}(e_1, e_2, \cdots, e_k)$, formed by all eigenvectors associated with positive eigenvalues is nonempty; let $\mathbb{R}^d = E \oplus G$ be an orthogonal sum. For $\kappa>0$ to be chosen small, define the test function
$$
	\phi(x) := \frac{1}{2} \langle Mx,x \rangle + \kappa \sup\limits_{e\in \mathbb{S}^{d-1}} \langle P_Ex, e \rangle,
$$
where $P_E$ is the orthogonal projection on $E$. Note, as $u_j \to u$ uniformly and $ \frac{1}{2} \langle Mx,x \rangle$ touches $u$ at $0$ from below,  $\phi$ touches $u_j$ from below at an interior point $x_j^\kappa \in B_{r}$, for all $0< \kappa \le \kappa_0$. Next, if  $x_j^\kappa \in G$, then   
$$
	\frac{1}{2} \langle Mx,x \rangle + \kappa  \langle P_Ex, e \rangle
$$
touches $u_j^\kappa$ at $x_j^\kappa$ for any choice of $e$. Thus, from the PDE satisfied by $u_j$, we obtain
\begin{equation}\label{edu2}
	|Mx_j^\kappa + \kappa e|^{\theta_j(x_j)} F_j(M) \le f_j(x_j), \quad \forall e\in \mathbb{S}^{d-1}.
\end{equation}
Since
$$
	\|M\| +\kappa \ge \sup\limits_{e\in \mathbb{S}^{d-1}}  |Mx_j^\kappa + \kappa e| \ge \kappa,
$$
taking the supremum in $e$ and subsequently letting $j\to \infty$ in \eqref{edu2}, we reach $ F_\infty(M) \le 0$. Finally, if $P_E(x_j^\kappa) \not = 0$, then
$$
	\sup\limits_{e\in \mathbb{S}^{d-1}} \langle P_E x_j^\kappa, e \rangle = \langle P_E x_j^\kappa, \frac{P_E x_j^\kappa}{|P_E x_j^\kappa|}  \rangle= |P_E x_j^\kappa|.
$$
Set
\[
	\nu  =  \frac{P_E x_j^\kappa}{|P_E x_j^\kappa|}.
\]
From the PDE satisfied by $u_j$, we reach:
\begin{equation}\label{edu3}
	|M x_j^\kappa + \kappa \nu|^{\theta_j(x_j^\kappa)} F_j \left (M + \kappa \left (\text{Id} - \nu \otimes \nu\right ) \right ) \le f_j(x_j^\kappa).
\end{equation}
Next write $x_j^\kappa$ in the basis formed by the eigenvectors of $M$, $x_j^\kappa = \sum_{i=1}^d a_i e_i$, 
so that
$$
	Mx_j^\kappa = \lambda_1 a_1e_1 + \lambda_2 a_2e_e + \cdots \lambda_k a_ke_k + \lambda_{k+1} a_{k+1} e_{k+1} +\cdots + \lambda_d a_d e_d,
$$
and, as set before, $\lambda_i > 0$ for all $i=1,2, \cdots,  k$. We  can estimate:
$$
	\begin{array}{lll}
		|M x_j^\kappa + \kappa \nu| &\ge& \langle M x_j^\kappa + \kappa \nu, \nu \rangle \\
		&=& \frac{1}{|P_E x_j^\kappa|} \langle  \sum_{i=1}^d \lambda_i a_i e_i ,\sum_{i=1}^k \lambda_i a_i e_i  \rangle + \kappa  \\
		&= & \sum_{i=1}^k \lambda_i a^2_i + \kappa\\
		&\ge& \kappa. 
	\end{array}
$$
Hence, multiplying \eqref{edu3} by $|M x_j^\kappa + \kappa \nu|^{-\theta_j(x_j^\kappa)}$  and letting $j\to \infty$, we finally reach
$$
	F_\infty(M) \le F_\infty \left (M + \kappa \left (\text{Id} - \nu \otimes \nu\right ) \right ) \le 0, 
$$
since $ \kappa \left (\text{Id} - \nu \otimes \nu\right ) \ge 0$.  This concludes the proof that $u_\infty$ is a viscosity supersolution to the equation $F_\infty(D^2h) = 0$. Arguing analogously, we obtain $u_\infty$ is also a viscosity subsolution to that equation, and thus \eqref{edu0} is proven.

It  now follows from \cite{ccbook} that $u_\infty\in {C}^{1,\bar{\alpha}}(B_{3/4})$, for some $0<\bar{\alpha}<1$ and that $\|u_\infty\|_{C^{1,\bar{\alpha}}(B_{1/2})} \le C$, where $C$ depends only on $d, \lambda, \Lambda$. Finally, taking $h = u_\infty$, we reach a contradiction on \eqref{edu-1}, for $j\gg 1$. The Lemma is finally proven. 
\end{proof}

\section{H\"older continuity of the gradient}\label{sec_c1alpha}

The approximation Lemma proven in the previous section sponsors a tangential path connecting the Krylov-Safonov regularity theory, available for the limiting profile, and the one for our problem of interest. This is the rationale behind the proof of Theorem \ref{teo_c1alpha}, which we describe in details below:

\begin{proof}[Proof of Theorem \ref{teo_c1alpha}] We start off the proof by fixing a number 
$$
	0 < \gamma <  \min \left \{\bar{\alpha}, \frac{1}{1+\|\theta^{+}\|_\infty + \|\theta^{-}\|_\infty } \right \}.
$$
Let us also choose and fix a point $y \in B_{1/2}$. We aim to establish  the existence of universal constants $0<r\ll1$, $C>1$, and a sequence of affine functions  
\[
	\ell_n(x):=a_n+b_n\cdot x,
\] 
with $(a_n)_{n\in\mathbb{N}}\subset\mathbb{R}$ and $(b_n)_{n\in\mathbb{N}}\subset\mathbb{R}^d$, verifying, for all every $n\in\mathbb{N}$, the following three estimates:

\begin{itemize}
\item[i)]  $\sup\limits_{B_{r^n}(y)}\left|u(x)\,-\,\ell_n(x)\right|\,\leq \,r^{n(1+ \gamma)}$;
\item[ii)]  $\left|a_{n-1}\,-\,a_n\right|  \leq Cr^{(n-1)(1+\gamma)}$;
\item[iii)] $ \left|b_{n-1}\,-\,b_n\right| \le Cr^{(n-1)\gamma} $
\end{itemize}
We show these by means of an induction argument.

\bigskip

\noindent \textbf{Step 1.} By a variable translation $x \mapsto y + \dfrac{1}{2}x$, we can consider $y=0$. We start by setting 
\[
	\ell_1(x):=h(0)+Dh(0)\cdot x,
\] 
where $h$ is the approximate function from Lemma \ref{GTA-path}, for  a $\delta> 0$ to be prescribed.  For a constant $C>1$, depending only on dimension and ellipticity,
$$
	\left | Dh(0) \right | + \sup\limits_{B_r} \left | h(x) - \ell_1(x) \right | \le Cr^{1+\bar{\alpha}}.
$$
Also, the triangle inequality yields
\begin{equation}\label{ind-edu1}
	\sup_{x\in B_{r}}\left|u(x)\,-\,\ell_1(x)\right|\,\leq\, \delta\,+\,Cr^{1+\bar{\alpha}}.
\end{equation}
To conclude the first step in the induction process we set 
$$
	a_1 = h(0), \quad b_1 = Dh(0),
$$
and in the sequel make two universal choices: initially we choose and fix $0<r\ll 1$ so small that the following estimates,
\begin{equation}\label{ind-edu2}
	r^\gamma \le \dfrac{1}{2}, \quad Cr^{1+\bar{\alpha}} \le \frac{1}{2} r^{1+\gamma}, \quad \text{and} \quad \left (1+4C \right )  r^{1-\gamma(1+\|\theta^{+}\|_\infty +\|\theta^{-}\|_\infty)} \le 1,
\end{equation}
are verified. Finally, we set
\begin{equation*}\label{ind-edu3}
	\delta\,:=\,\frac{r^{1+\gamma}}{2}, 
\end{equation*}
which fixes through Lemma \ref{GTA-path}, with $q=0$, the smallness condition for $\varepsilon \ge \|f\|_{L^\infty(B_1)}$. Recall from subsection \ref{subsec_small}, such a smallness assumption on  $\|f\|_{L^\infty(B_1)}$ can be assumed with no loss of generality. The fist step of induction has been verified.

\bigskip

\noindent\textbf{Step 2.} Suppose the induction hypotheses have been established for $n=1,2, \cdots, k$, for some $k\in\mathbb{N}$. We must show the case $n=k+1$ also holds true. 
For that, we introduce the auxiliary function:
\[
  v_k(x):=\frac{u(r^kx)\,-\,\ell_k(r^kx)}{r^{k(1+\gamma)}}.
\]
Notice that $v_k$ solves
\[
  |r^{-k\gamma}b_k+Dv_k|^{\theta_k(x)}F_k(D^2v_k)=f_k(x)
\]
where 
\[
	F_k(M)=r^{k(1-\gamma)}F(r^{(\gamma-1)k}M),
\]
is a $(\lambda, \Lambda)$-elliptic operator, $\theta_k(x)=\theta(r^kx)$ and
\[
f_k(x)=r^{k(1-\gamma)-k\gamma\theta_k(x)}f(r^kx).
\]
From our choice for $\gamma$ in \eqref{ind-edu1}, we have $f_k\in L^\infty(B_1)$. Initially we estimate
$$
	\begin{array}{lll}
		\left | b_k \right | &=& \left | b_1 + \sum\limits_{i=1}^k \left ( b_i - b_{i-1} \right ) \right |  \\
		&\le &  \left | b_1 \right | + \sum\limits_{i=1}^k \left | b_i - b_{i-1} \right | \\
		&\le &  C + \sum\limits_{i=1}^k  Cr^{(k-1)\gamma} \\
		& \le &  \frac{2C}{1-r^{\gamma}}  \\
		&\le & 4C,
	\end{array}
$$
in view of first estimate required in \eqref{ind-edu2}. Next, we note 
$$
	-\theta_k(x) \le \|\theta^{-}\|_\infty < 1.	
$$
Thus we can further estimate
$$
	\begin{array}{lll}
		\left \|f_k(x) \left [1+ |r^{-k \gamma}b_k| \right ]^{-\theta_k(x)} \right \|_\infty &\le& \left \|f_k(x) \left [1+ |r^{-k \gamma}b_k|^{\|\theta^{-}\|_\infty}  \right ]\right \|_\infty \\
		&\le & \varepsilon \left [  r^{(1-\gamma)-\gamma\|\theta^+\|_\infty} + 4C r^{(1-\gamma)-\gamma\|\theta^+\|_\infty - \gamma\|\theta^-\|_\infty} \right ] \\
		&\le & \varepsilon ,
	\end{array}
$$
in accordance to the third estimate enforced in \eqref{ind-edu2}. We have proved $v_k$ is under the assumptions of Lemma \ref{GTA-path}, which assures the existence of a function $\bar{h}\in {C}^{1,\bar{\alpha}}(B_1)$ such that
\[
  \displaystyle\sup_{x\in B_{r}}|v_k(x)-\bar{h}(x)|\leq \delta.
\]
As in Step 1, we estimate
\[
  \displaystyle\sup_{x\in B_{r}}|v_k(x)-\bar{\ell}(x)|\leq r^{1+\gamma},
\]
with 
\[
  \bar{\ell}(x)\,=\,\bar{a}\,+\,\bar{b}\cdot x.
\]
Setting
\[
  \ell_{k+1}(x):= \ell_k(x)\,+\,r^{k(1+\gamma)}\bar{\ell}(r^{-k}x).
\]
yields
\[
	\sup_{x\in B_{r^{k+1}}}\left|u(x)\,-\,\ell_{k+1}(x)\right|\,\leq \,r^{(k+1)(1+\gamma)}.
\]
Also,
\[
	\left|a_{k+1}\,-\,a_k\right|\,+\,r^k\left|b_{k+1}\,-\,b_k\right|\,\leq\, Cr^{k(1+\gamma)},
\]
and thus the $(k+1)$-th step in the induction is complete.

\bigskip

\noindent \textbf{Step 3.} Both  sequences $(a_n)_{n\in\mathbb{N}}$ and $(b_n)_{n\in\mathbb{N}}$ furnished through the induction process are Cauchy sequences and therefore converge. Let us label  
$$
	a_n \to a^{\star} \quad \text{ and } \quad b_n \to b^{\star}.
$$ 
 Evaluating estimate 
 $$
 	\sup\limits_{x\in B_{r^n}}\left|u(x)\,-\,\ell_n(x)\right|\,\leq \,r^{n(1+ \gamma)}
$$ 
on $x=0$ and letting $n\to \infty$ yields $a^{\star}=u(0).$ On the other hand, estimate $ \left |b_{n-1}\,-\,b_n\right| \le Cr^{(n-1)\gamma} $ gives
\[
  |b_n-b^{\star}|\leq Cr^{n\gamma}.
\]
To conclude the proof,  let $0<\rho<r$ and take the first integer $n\in\mathbb{N}$ for which $r^{n+1}\,<\rho\,\leq r^n.$ We can estimate
\begin{equation*}
\begin{aligned}
  \displaystyle\sup_{x\in B_{\rho}}|u(x)-(u(0)+b^{\star}\cdot x)|&\leq\displaystyle\sup_{x\in B_{r^n}}|u(x)-\ell_n(x)|+\displaystyle\sup_{x\in B_{r^n}}|a_n-u(0)|\\&\quad+\displaystyle\sup_{x\in B_{r^n}}|b_n-b^{\star}||x|\\
  &\leq\frac{C}{r}r^{(n+1)(1+\gamma)}\\
  &\leq C\rho^{1+\gamma}.
\end{aligned}
\end{equation*}
Now, notice that, since $u$ is continuous in $B_1,$ for $x\in\overline{B_{\rho}}$ such that $|x|=\rho$ we obtain  
\begin{equation}\label{u diffe}
|u(x)-(u(0)+b^{\star}\cdot x)|\leq C |x|^{1+\gamma}.
\end{equation}
But, since $\rho$ is arbitrary, $0<\rho<r,$ estimate \eqref{u diffe} is true for every $x\in B_r.$ Hence,
\[
u(x)=u(0)+b^{\star}\cdot x+o(|x|),
\]
which implies that $u$ is differentiable at $0$ and that $b^{\star}=Du(0).$ If we now go back to the original point $y\in B_{1/2}$, we conclude the estimate
\begin{equation}\label{pointwise diff}
	\displaystyle\sup_{x\in B_{\rho}(y) }|u(x)-  \left (u(y)+Du(y)\cdot (x-y) \right )|\leq C\rho^{1+\gamma}
\end{equation}
holds for all $0< \rho < 1/2$. 

\bigskip

\noindent \textbf{Step 4.} We finally obtain H\"older continuity of the gradient from \eqref{pointwise diff}. Given two points $y_1 \not = y_2 \in B_{1/2}$, estimate \eqref{pointwise diff} can be written as
\begin{equation}\label{local est1}
	 u(y_2) - u(y_1) - Du(y_1) \cdot (y_1 - y_2)   = \text{O} \left (  |y_1 - y_2|^{1+\gamma} \right ).
\end{equation}
Similarly, 
\begin{equation}\label{local est2}
	  u(y_1) - u(y_2) - Du(y_2) \cdot (y_2 - y_1)  =\text{O} \left (  |y_1 - y_2|^{1+\gamma} \right ).
\end{equation}
Adding these two estimates together and interchanging the roles of $y_1$ and $y_2$ gives
\begin{equation}\label{local est3}
	\left | \partial_\nu u(y_1)  - \partial_\nu u(y_2)  \right |= \text{O}\left (  |y_1 - y_2|^{ \gamma} \right ),
\end{equation}
where $\nu := \frac{y_1 - y_2}{|y_1-y_2|}$. If we subtract \eqref{local est1} from \eqref{local est2}, taking into account \eqref{local est3}, we obtain
\begin{equation}\label{local est35}
	 \left | u(y_1) - u(y_2) \right | +  \left |\partial_\nu u(y_1)  + \partial_\nu u(y_2) \right | \cdot |y_1-y_2| = \text{O} \left (  |y_1 - y_2|^{1+\gamma} \right ).
\end{equation}

Next let $\nu_2, \nu_3, \cdots, \nu_d$ be unitary vectors such that $\{\nu, \nu_2, \cdots, \nu_d\}$ is an orthonormal basis of $\mathbb{R}^d$. Applying \eqref{pointwise diff} at $y_1$ for 
$$
	x_i :=  \frac{y_1 + y_2}{2} +  \frac{\sqrt{3}\left |y_1-y_2 \right |}{2}\nu_i 
$$
gives
\begin{equation}\label{local est4}
	 u(x_i) = u(y_1) - Du(y_1) \cdot  (x_i - y_1)  +  \text{O} \left (  |y_1 - y_2|^{1+\gamma} \right ),
\end{equation}
and likely, applying at $y_2$ for the same $x_i$ yields
\begin{equation}\label{local est5}
	 u(x_i) = u(y_2) - Du(y_2) \cdot (x_i - y_2)  +  \text{O} \left (  |y_1 - y_2|^{1+\gamma} \right ).
\end{equation}
Thus, subtracting \eqref{local est5} from \eqref{local est4}, we end up with
\begin{equation}\label{local est6}
	\left (u(y_1) - u(y_2) \right ) + Du(y_2) \cdot (x_i - y_2) - Du(y_1) \cdot (x_i - y_1) =  \text{O} \left (  |y_1 - y_2|^{1+\gamma} \right ).
\end{equation}
Finally, we observe that,
\begin{equation}\label{local est7}
	Du(y_1) \cdot (x_i - y_1) = - \partial_\nu u(y_1) \frac{ |y_1 - y_2|}{2} + \frac{\sqrt{3}|y_1 - y_2| }{2} \partial_{\nu_i} u(y_1) 
\end{equation}
and 
\begin{equation}\label{local est9}	
	Du(y_2) \cdot (x_i - y_2) =  \partial_\nu u(y_2) \frac{ |y_1 - y_2|}{2} + \frac{\sqrt{3}|y_1 - y_2| }{2} \partial_{\nu_i} u(y_2).
\end{equation}
Together with \eqref{local est35} and \eqref{local est6}, the expressions in \eqref{local est7} and \eqref{local est9} produce
\begin{equation*}\label{local est8}	
	\partial_{\nu_i} u(y_2) - \partial_{\nu_i} u(y_1) =  \text{O} \left (  |y_1 - y_2|^{\gamma} \right ),
\end{equation*}
for all $i=2,3,4, \cdots, d$. Then, the proof of the theorem is complete.
\end{proof}

\section{Sharp pointwise estimates} \label{Sharp sct}

In this final section we discuss sharp, improved geometric estimates in the spirit of \cite{T1, T3}. We start off with  a consequence of our main Theorem \ref{teo_c1alpha} in the pure singular case.

\begin{corollary} \label{cor1}
Let $u\in {C}(B_1)$ be a viscosity solution to \eqref{mainequation}. Suppose A\ref{assump_operatorF} and A\ref{assump_theta1} are in force. Assume further $\theta(x) \le 0$.
Then $u\in {C}^{1,\beta}_{loc}(B_1)$, for all  $\beta$ verifying 
$$
	\beta<  \bar{\alpha},
$$
where $\bar{\alpha}$ is the sharp H\"older continuity exponent associated with $F$-harmonic functions. In particular,  $u\in {C}^{1,\text{LogLip}}_{loc}(B_1)$, provided $F$ is convex.
\end{corollary}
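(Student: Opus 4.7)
The plan is to exploit the pure singular structure of the equation to recast it as a fully nonlinear uniformly elliptic equation with bounded right-hand side, and then appeal to classical regularity theory. Since $\theta(x) \le 0$, we have $-\theta(x) \ge 0$, hence $|Du|^{-\theta(x)}$ remains locally bounded as soon as $|Du|$ itself does. The argument proceeds in three main steps, with the verification of the reformulated viscosity equation being the most delicate.

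First, I would invoke Theorem \ref{teo_c1alpha} to obtain a preliminary gradient estimate: under A\ref{assump_operatorF}--A\ref{assump_theta1}, since $\theta^{+}\equiv 0$, the solution $u$ lies in $C^{1,\gamma_0}_{loc}(B_1)$ for any $\gamma_0 < \min\{\bar{\alpha}, 1/(1+\|\theta^{-}\|_\infty)\}$. In particular, $Du$ is continuous on $B_{1/2}$ and $\|Du\|_{L^\infty(B_{1/2})} \le M$ for a universal constant $M$. Then I would set
\[
g(x) := f(x)\, |Du(x)|^{-\theta(x)}, \qquad x \in B_{1/2}.
\]
A simple case analysis on whether $|Du(x)| \le 1$ or $|Du(x)| \ge 1$, together with $-\theta(x) \in [0,\|\theta^{-}\|_\infty]$ and $|Du| \le M$, gives $|Du(x)|^{-\theta(x)} \le \max\{1, M^{\|\theta^{-}\|_\infty}\}$; at points where $Du(x_0) = 0$, this quantity tends to zero if $\theta(x_0) < 0$ and equals $1$ if $\theta(x_0) = 0$. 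Consequently $g \in L^\infty(B_{1/2})$, and $g$ is continuous on the open set $\{Du \neq 0\}$.

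The main technical step is to verify that $u$ is, in the viscosity sense, a solution of the uniformly elliptic equation
\[
F(D^2 u) = g(x) \quad \text{in } B_{1/2}.
\]
At points $x_0$ with $Du(x_0) \neq 0$, this is immediate: a neighborhood of $x_0$ avoids the critical set $\mathscr{C} = \{Du = 0\}$, the original PDE divides cleanly by $|Du|^{\theta(x)}$, and continuity of $Du$ propagates the viscosity inequalities in the usual way. The delicate case is $x_0 \in \mathscr{C}$, where the original equation is genuinely singular. To deal with this, one may appeal to the Imbert-Silvestre viscosity framework for singular fully nonlinear equations (in which test functions with vanishing gradient at the touching point yield the relaxed inequality $F(D^2 \varphi(x_0)) \le 0 = g(x_0)$), or alternatively pass to the limit in the regularized non-singular equation $(|Du_\delta|^2 + \delta^2)^{\theta(x)/2} F(D^2 u_\delta) = f$, using the compactness developed in Section \ref{sec_compactness} to produce $u_\delta \to u$ locally uniformly and transfer the viscosity inequalities.

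Once $u$ is recognized as a viscosity solution of $F(D^2 u) = g \in L^\infty$, the standard Caffarelli $C^{1,\alpha}$ interior estimate for fully nonlinear uniformly elliptic equations gives $u \in C^{1,\beta}_{loc}(B_1)$ for every $\beta < \bar{\alpha}$, which is the first assertion. For the convex case, the sharp borderline estimate for $F(D^2 u) = g \in L^\infty$ with $F$ convex, obtained by approximating $g$ by H\"older sources and iterating the Evans-Krylov $C^{2,\alpha}$ estimate with dyadic control (as in \cite{T1, T3}), upgrades this to $u \in C^{1,\text{LogLip}}_{loc}(B_1)$. The principal obstacle is the rigorous passage across the critical set $\mathscr{C}$ in the third step: away from $\mathscr{C}$ everything follows standard lines, but on it one must carefully invoke either the singular viscosity framework or the approximation argument described above.
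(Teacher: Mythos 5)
Your proof takes essentially the same route as the paper: use Theorem~\ref{teo_c1alpha} to obtain $Du\in L^\infty_{\mathrm{loc}}$, absorb $|Du|^{-\theta(x)}$ into a bounded source $g$, verify that $u$ solves $F(D^2u)=g$ in the viscosity sense (the paper delegates this step to the argument in \cite{BD}, which is the same singular viscosity framework/regularization you sketch), and conclude via Caffarelli's interior estimates, with the convex-case $C^{1,\mathrm{LogLip}}$ upgrade coming from the borderline estimate of \cite{T2}.
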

\begin{proof}
	It follows from Theorem \ref{teo_c1alpha} that $u\in {C}^{1,\gamma}_{loc}(B_1)$ for $0<\gamma \ll 1$, as described therein. In particular $|Du| \in L^\infty_\text{loc}(B_1)$. Arguing as in \cite{BD}, one can verify that $u$ solves
	$$
		F(D^2 u) = f(x) \cdot |Du|^{-\theta(x)} =: g(x) \in L^\infty_\text{loc}(B_1),
	$$
	in the viscosity sense. Corollary \ref{cor1} now follows from \cite{C1}. When $F$ is convex,  ${C}^{1,\text{LogLip}}_{\text{loc}}(B_1)$ follows from  \cite{T2}. 
\end{proof}

Next we would like to investigate geometric growth estimates on $u$ at a given point $y \in B_{1/2}$.  Well, if, say $|Du(y)| \ge \tau >0$, then by Theorem  \ref{teo_c1alpha}, there exists a radius $r_\tau>0$, depending only on $\tau$ and universal parameters, such that $|Du| \ge \frac{1}{2}\tau_0$ in $B_{r_\tau}(y)$. Thus the solution $u$ has an asymptotic growth behavior of an $F$-harmonic function at $y$; that is:
$$
	\sup\limits_{x\in B_r(y)} \left | u(x) - \left ( u(y) + Du(y) \cdot (x-y) \right )\right | \le C r^{1+\gamma},
$$
for all $\gamma <  \bar{\alpha}$ and all $0<r< r_\tau$. The really interesting case is when $y$ is a critical point of $u$.  Hereafter estimates shall depend on the modulus of continuity of the variable exponent $\theta$. More precisely, we will impose the following condition on $\theta$:

\begin{assumption}[Continuity of $\theta$]\label{assump_theta2} 
We assume 
$$
	\left | \theta(x) - \theta(y)  \right | \le L \omega \left ( \left |x-y \right | \right ), 
$$
for a modulus of $\omega \colon \mathbb{R}_{+} \to  \mathbb{R}_{+}$ satisfying $\omega(1) = 1$.
\end{assumption}

Under condition A\ref{assump_theta2}, it follows from Corollary \ref{cor1} that if $\theta(y) < 0$ and, say, $F$ is convex, then at $y$ $u$ grows in a  ${C}^{1,\text{LogLip}}$ fashion. In what follows, we pursue fine geometric estimates for solutions at degenerate elliptic, critical points. To simplify the presentation, we shall assume hereafter convexity of $F$, under which Evans--Krylov theorem assures $F$-harmonic functions are of class $C^{2,\alpha}$ for some $\alpha>0$, see \cite[Chapter 6]{ccbook}.

\begin{theorem} \label{sharp1}
Let $u\in {C}(B_1)$ be a viscosity solution to \eqref{mainequation}. Suppose A\ref{assump_operatorF}, A\ref{assump_theta1}, and A\ref{assump_theta2} are in force and $F$ is convex. Assume further that $Du(y) = 0$, for some $y \in B_{1/2}$ and that $\theta(y) > 0$. Then, for any $p > \theta(y)$, there holds
$$
	\sup\limits_{x\in B_r(y)} \left | u(x) -  u(y) \right | \le C r^{1+ \frac{1}{1+p}},
$$
for a constant $C>1$ that depends only on $\left ( p-\theta(y) \right )$, the dimension $d$, $\|u\|_\infty$, ellipticity constants, and the modulus of continuity of $\theta$, meaning $L$ and $\omega$.
\end{theorem}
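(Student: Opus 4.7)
The plan is to adapt the iterative affine-approximation scheme of Theorem \ref{teo_c1alpha}, replacing the Krylov--Safonov $C^{1,\bar\alpha}$ estimate at the limit level by the Evans--Krylov $C^{2,\alpha}$ estimate, which is available under the convexity hypothesis on $F$. The gain of one order of differentiability at the approximate profile translates, through the scaling, into the enhanced growth rate $1+\beta$ with $\beta:=1/(1+p)$. After translating so that $y=0$ and invoking Subsection \ref{subsec_small} to normalize $\|u\|_{L^\infty(B_1)}\le 1$ and $\|f\|_{L^\infty(B_1)}\le\varepsilon_0$ with $\varepsilon_0$ to be fixed, I will seek universal constants $0<\rho\ll 1$, $C>0$, and a sequence of affine maps $\ell_n(x)=b_n\cdot x$ with $b_0=0$ verifying, for every $n\ge 0$, the two estimates $\sup_{B_{\rho^n}}|u-u(0)-b_n\cdot x|\le\rho^{n\tau}$ and $|b_{n+1}-b_n|\le C\rho^{n\beta}$, where $\tau:=1+\beta$.

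Granted the induction, $(b_n)$ is Cauchy, and the differentiability of $u$ at $y$ already furnished by Theorem \ref{teo_c1alpha} forces its limit to equal $Du(0)=0$; hence $|b_n|\le \tilde C\rho^{n\beta}$, which combined with the first estimate yields the claimed pointwise growth rate at all sufficiently small scales. The inductive step is the crux. Assuming the two estimates up to step $n$, define
\[
v(x):=\rho^{-n\tau}\bigl[u(\rho^n x)-u(0)-b_n\cdot\rho^n x\bigr],
\]
so that $\|v\|_{L^\infty(B_1)}\le 1$ and $v(0)=0$. A direct computation shows $v$ is a viscosity solution to
\[
|q_n+Dv|^{\theta_n(x)}F_n(D^2v)=f_n(x)\quad\text{in }B_1,
\]
with $q_n:=\rho^{-n\beta}b_n$, $\theta_n(x):=\theta(\rho^n x)$, $F_n(M):=\rho^{n(1-\beta)}F(\rho^{-n(1-\beta)}M)$ (still convex and $(\lambda,\Lambda)$-elliptic), and $f_n(x):=\rho^{n\beta(p-\theta_n(x))}f(\rho^n x)$.

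The continuity assumption A\ref{assump_theta2}, together with $\theta(0)>0$, provides the two uniform bounds that make the approximation lemma applicable. First, $\theta_n(x)\le\theta(0)+L\omega(\rho^n)$ on $B_1$, and choosing $\rho$ so small that $L\omega(\rho)<(p-\theta(0))/2$ yields $p-\theta_n\ge(p-\theta(0))/2>0$ uniformly in $n$, whence $\|f_n\|_\infty\le\rho^{n\sigma}\|f\|_\infty$ with $\sigma:=\beta(p-\theta(0))/2>0$. Second, the same continuity and $\theta(0)>0$ force $\theta_n>0$ on $B_1$ for small $\rho$, so $[1+|q_n|]^{-\theta_n(x)}\le 1$ pointwise, irrespective of how large $|q_n|$ might be; this neatly bypasses the usual difficulty of controlling the effective gradient $q_n$. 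Consequently, $\|f_n[1+|q_n|]^{-\theta_n}\|_\infty\le\varepsilon_0$, and Lemma \ref{GTA-path} furnishes, for every prescribed $\delta>0$, a function $h$ with $\|v-h\|_{L^\infty(B_{1/2})}\le\delta$. Because $F_n$ is convex, revisiting the compactness step in the proof of that lemma combined with Evans--Krylov upgrades the regularity of $h$ to $C^{2,\alpha}(B_{1/2})$ with universal bounds; Taylor expansion at the origin then gives $|h(x)-h(0)-Dh(0)\cdot x|\le M|x|^2$ with $|Dh(0)|\le C$, and since $|h(0)|\le\delta$, the triangle inequality produces $|v(x)-Dh(0)\cdot x|\le 2\delta+M|x|^2$ on $B_{1/2}$.

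The universal choices are now imposed: fix $\rho$ small enough so that $M\rho^{2-\tau}\le 1/2$ (possible since $\tau<2$), then set $\delta:=\rho^\tau/4$, which in turn fixes $\varepsilon_0$ through Lemma \ref{GTA-path}. On $B_\rho$ this yields $|v(x)-Dh(0)\cdot x|\le\rho^\tau$; unwinding the rescaling via $z=\rho^n x$ and defining $b_{n+1}:=b_n+\rho^{n\beta}Dh(0)$ closes the induction, with $|b_{n+1}-b_n|=\rho^{n\beta}|Dh(0)|\le C\rho^{n\beta}$. The main obstacle will be ensuring that the approximating profile produced by Lemma \ref{GTA-path} genuinely inherits convexity at the limit level so that Evans--Krylov can be invoked, and simultaneously carrying out the book-keeping that makes the choices of $\rho$, $\delta$ and $\varepsilon_0$ independent of $n$. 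Assumption A\ref{assump_theta2} and the strict sign $\theta(y)>0$ are indispensable for this uniformity; without convexity of $F$ the scheme would only recover the exponent already delivered by Theorem \ref{teo_c1alpha}.
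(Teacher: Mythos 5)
Your proposal is correct, but it takes a genuinely different route from the paper's, and it is worth spelling out the contrast.

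The paper does not carry affine corrections at all. In Step~1 of its proof it re-runs the compactness argument of Lemma~\ref{GTA-path} \emph{incorporating the hypothesis $Du(y)=0$}, together with the locally uniform convergence of gradients $(u_j,Du_j)\to(v_\infty,Dv_\infty)$ provided by Theorem~\ref{teo_c1alpha}, to conclude that the approximating $F$-harmonic profile $h$ can be chosen with $Dh(0)=0$. Because $Du(0)=0$ is exactly preserved under the rescaling $v(x)=u(\lambda x)/\lambda^{1+\frac{1}{1+p}}$, each iterate again has vanishing gradient at the origin, so the scheme is a bare power rescaling with no drifting linear part and no $q$-term ever enters. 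You instead invoke Lemma~\ref{GTA-path} off the shelf, allowing $Dh(0)\neq 0$, carry the affine sequence $b_n$ (hence a nontrivial effective vector $q_n=\rho^{-n\beta}b_n$), and only at the end absorb it by observing that $b_n\to Du(0)=0$ at the geometric rate forced by the Cauchy estimate. Your observation that the positivity $\theta_n>0$ makes $[1+|q_n|]^{-\theta_n(\cdot)}\le 1$ is a neat way to discharge the hypothesis of Lemma~\ref{GTA-path} without any a priori control on $|q_n|$; the paper sidesteps the issue entirely because $q_n\equiv 0$. The remaining ingredients (Evans--Krylov $C^{2,\alpha}$ regularity of $F_\infty$-harmonic functions under convexity of $F$, the uniform choice of $\rho$, $\delta$, $\varepsilon_0$, and the exponent bookkeeping $\theta(\rho^n x)<p$ on $B_1$ via A\ref{assump_theta2}) are shared by both arguments. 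Net effect: your version is longer and a bit more bureaucratic, the paper's is cleaner because it pushes the critical-point hypothesis into the approximation lemma rather than cleaning it up at the end, but the two proofs establish the same statement with the same dependencies.

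One small point you should make explicit if you write this up: the identity $\lim_n b_n = Du(0)$ follows either from the $C^{1,\gamma}$ regularity of Theorem~\ref{teo_c1alpha}, or more directly from the first inductive estimate itself, which shows $u$ is differentiable at $0$ with $Du(0)=\lim_n b_n$; combined with the hypothesis $Du(y)=0$ this gives $|b_n|\le \tilde C\rho^{n\beta}$ and closes the argument.
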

\begin{proof} We start off the proof by assuming, with no loss of generality, $\|u\|_\infty = 1$, $y = u(y) = 0$. We shall divide the proof into two steps as to better convey the reasoning behind the arguments.

\bigskip 

\noindent \textbf{Step 1.}
We claim that given $\delta > 0$ there exists $\epsilon> 0$, depending only on $\delta> 0$, the dimension $d$, ellipticity constants and the modulus of continuity of $\theta$, such that if $\|f\|_{L^\infty(B_1)} \le \epsilon$, then
\begin{equation*}\label{sharp eq1}
	\sup\limits_{B_{1/2}} \left | v(x) - h(x) \right | < \delta
\end{equation*}
for some $F$-harmonic function $h$ verifying $Dh(0) = 0$. 

 As in Lemma \ref{GTA-path}, we shall establish this by \textit{reductio ad absurdum}. If this is not the case, then there exist  $\delta_0>0$ and sequences $(F_j)_{j\in\mathbb{N}}$, $(\theta_j)_{j\in\mathbb{N}}$, $(f_j)_{j\in\mathbb{N}}$, and $(u_j)_{j\in\mathbb{N}}$ satisfying: 
\begin{itemize}
\item[i)] $F_j(\cdot) \ \ \mbox{is} \ \ (\lambda, \Lambda)-\mbox{elliptic}$;
\item[ii)] $-1 < \theta_j \le \sup \theta$ and $\theta_j$ has the same modulus of continuity of $\theta$;	
\item[iii)] $\|f_j\|_{L^\infty(B_1)} \leq\frac{1}{j}$;
\item[iv)] $\|u_j\|_{L^\infty(B_1)} =1$; $|D_j(0)| = 0$;
\item[v)] $|Du_j|^{\theta_j(x)}F_j(D^2u_j)=f_j$;
\end{itemize}	
however
\begin{equation}\label{sharp eq2}
	\displaystyle\sup_{B_{1/2}}|u_j-h|>\delta_0,
\end{equation}
for all $F$-harmonic function $h$ for which $0$ is a critical point and $h(0) = 0$. It follows from Theorem \ref{teo_c1alpha} that, up to a subsequence
$$
	(u_j, Du_j) \to (v_\infty, Dv_\infty),
$$
locally uniformly in $B_{1/2}$. In particular $v_\infty(0) = \left | Dv_\infty(0) \right |= 0$. Also, from the Arzel\`a-Ascoli Theorem, up to a subsequence, $\theta_j \to \theta_\infty$ and $F_j \to F_\infty$ locally uniformly. Taking the limit as $j \to \infty$ in (v), we conclude
$$
	|D v_\infty|^{\theta_\infty(x)} F_\infty(D^2 v_\infty) = 0,
$$
and arguing as in \cite{Imb-Silv1}, we deduce $F_\infty(D^2 v_\infty) = 0$. We arrive to a contradiction by taking $h= v_\infty$ in \eqref{sharp eq2}.

\bigskip

\noindent \textbf{Step 2.}
Next, for $p > \theta(0)$ fixed and for $\lambda>0$ to be determined, we estimate
$$
	\begin{array}{lll}
		\sup\limits_{B_\lambda} |u(x)| &\le& \sup\limits_{B_\lambda} |u(x) - h(x)| + \sup\limits_{B_\lambda} |h(x)|   \\
		& \le & \delta + C \lambda^{\frac{3}{2}+ \frac{1}{2\left ( 1+p \right )}} \\
		& \le & \delta + \frac{1}{2} \lambda^{1+ \frac{1}{1+p}} 
	\end{array}
$$
provided we choose $\lambda$ so small that 
$$
	 C \lambda^{\frac{3}{2}+ \frac{1}{2\left ( 1+p \right )}} \le \frac{1}{2} \lambda^{1+ \frac{1}{1+p}}.
$$ 
From A\ref{assump_theta2}, we can take $\lambda$ even smaller, as to assume
\begin{equation}\label{sharp eq3}
	 \theta(\lambda x) < p,
\end{equation}
for all $x\in B_1$. In the sequel, we select
$$
	 \delta \le \frac{1}{2} \lambda^{1+ \frac{1}{1+p}}
$$
which determines the smallness condition on $\|f\|_\infty$ by means of the initial claim.  We have obtained
\begin{equation}\label{sharp eq4}
	\sup\limits_{B_\lambda} |u(x)| \le \lambda^{1+ \frac{1}{1+p}}.
\end{equation}
The idea now is to rescale this inequality to the unit picture and apply it recursively. That is, the rescaled function
$$
	v(x) := \frac{u(\lambda x)}{\lambda^{1+ \frac{1}{1+p}}},
$$
is clearly normalized, by \eqref{sharp eq4}, and also verifies
$$
	|D v|^{\theta(\lambda x)} F_\lambda(D^2 v) = \lambda^{1 - \frac{1+\theta(\lambda x)}{1+p}} f(\lambda x)  =: g(x),
$$
in the viscosity sense. From \eqref{sharp eq3}, $\|g\|_\infty \le \epsilon$, and thus $v$ is entitled to the same conclusions as $u$. In particular, \eqref{sharp eq4} holds for $v$, which leads to
\begin{equation*}\label{sharp eq5}
	\sup\limits_{B_{\lambda^2}} |u(x)| \le \lambda^{2\left (1+ \frac{1}{1+p} \right )}.
\end{equation*}
Continuing the process inductively yields the conclusion of Theorem \ref{sharp1}.
\end{proof}

Theorem \ref{sharp1} gives an asymptotically sharp geometric growth estimate of $u$ at a degenerate critical point. Such a result is indeed optimal, unless we require further control on the modulus of continuity of $\theta$. In our final theorem we show that $u$ grows precisely as a ${C}^{\frac{2+\theta(y)}{1+\theta(y)}}$ function, provided $\theta$ is Dini continuous. More precisely, we shall require:

\begin{assumption}[Critical continuity of $\theta$]\label{assump_theta3} 
We assume 
$$
	\left | \theta(x) - \theta(y)  \right | \le L_1 \omega \left ( \left |x-y \right | \right ), 
$$
for a modulus of $\omega \colon \mathbb{R}_{+} \to  \mathbb{R}_{+}$ satisfying $\omega(1) = 1$ and that
\begin{equation}\label{CMC}
	\limsup\limits_{t\to 0} \omega(t) \ln \left( \frac{1}{t} \right ) \le L_2.
\end{equation}
\end{assumption}

\smallskip

While \eqref{CMC} is indeed very mild, we call assumption A\ref{assump_theta3} {\em critical} because this condition seems to play a central role in the regularity theory for variable exponent PDEs. For apparently very different reasons, assumption  A\ref{assump_theta3} is captious in the variational theory of functionals with $p(x)$-growth. For instance, it is shown in \cite{V97} that $p(x)$-growth functionals exhibit the so called Lavrentiev
phenomenon if and only if A\ref{assump_theta3} is violated. In  \cite{ABF} it is proved that the
singular part of the measure representation of relaxed integrals with $p(x)$-growth 
fades out if and only if A\ref{assump_theta3} holds true.

\smallskip

We are ready to establish sharp geometric estimate at critical points of viscosity solutions to  \eqref{mainequation}; compare it with the $C^{p'}$-regularity conjecture for functions whose $p$-laplacian are bounded, \cite{ATU1, ATU2}.

\begin{theorem} \label{sharp2}
Let $u\in {C}(B_1)$ be a viscosity solution to \eqref{mainequation}. Suppose A\ref{assump_operatorF}, A\ref{assump_theta1}, and A\ref{assump_theta3} are in force and that $F$ is convex. Assume further that $Du(y) = 0$, for some $y \in B_{1/2}$ and that $\theta(y) > 0$. Then,  
$$
	\sup\limits_{x\in B_r(y)} \left | u(x) -  u(y) \right | \le C r^{1+ \frac{1}{1+\theta(y)}},
$$
where $C$ depends only on the dimension $d$, $\|u\|_\infty$, ellipticity constants, and the modulus of continuity of $\theta$, meaning $L_1$, $L_2$ and $\omega$.
\end{theorem}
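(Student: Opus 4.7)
The plan is to refine the iteration behind Theorem \ref{sharp1}: whereas there one pays a small price by working with a perturbative exponent $p>\theta(y)$, here the idea is to iterate directly at the sharp exponent
$$
\sigma := 1 + \frac{1}{1+\theta(y)},
$$
using the logarithmic control \eqref{CMC} to tame the cumulative oscillation of $\theta$ across all dyadic scales. After translating $y$ to $0$, subtracting $u(y)$, and applying the scaling reduction of Subsection \ref{subsec_small}, one may assume $u(0)=0$, $Du(0)=0$, $\|u\|_{L^\infty(B_1)} \le 1$, and $\|f\|_{L^\infty(B_1)} \le \varepsilon$ for a small universal $\varepsilon>0$ to be selected in the course of the argument; recall $\theta(0)>0$.

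The core step is the inductive geometric decay
$$
\sup_{B_{\lambda^n}} |u| \le \lambda^{n\sigma}, \qquad n=0,1,2,\ldots,
$$
for a small universal $\lambda \in (0,1/2)$. For the inductive step from $n$ to $n+1$, I would consider
$$
v_n(x) := \frac{u(\lambda^n x)}{\lambda^{n\sigma}}, \qquad x \in B_1.
$$
A direct computation, using $\sigma-1 = 1/(1+\theta(0))$, shows that $v_n$ is normalized and satisfies
$$
|Dv_n|^{\theta_n(x)} \widetilde F_n(D^2 v_n) = g_n(x) \quad \text{in } B_1,
$$
with $\theta_n(x)=\theta(\lambda^n x)$, $\widetilde F_n(M) := \lambda^{n\theta(0)/(1+\theta(0))} F(\lambda^{-n\theta(0)/(1+\theta(0))} M)$ still $(\lambda,\Lambda)$-elliptic and convex, and
$$
g_n(x) = \lambda^{\, n(\theta(0)-\theta_n(x))/(1+\theta(0))} f(\lambda^n x).
$$
Moreover $\theta_n(0)=\theta(0)>0$ and $Dv_n(0)=0$.

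The heart of the argument will be the uniform-in-$n$ bound on the new source. Since A\ref{assump_theta3} gives $|\theta_n(x)-\theta(0)| \le L_1 \omega(\lambda^n)$ for $x\in B_1$, one obtains
$$
\|g_n\|_{L^\infty(B_1)} \le \exp\!\left(\frac{L_1}{1+\theta(0)}\,\omega(\lambda^n)\ln(1/\lambda^n)\right) \|f\|_{L^\infty(B_1)} \le K\varepsilon,
$$
where $K$ depends only on $L_1$, $L_2$ and $\theta(0)$, thanks to \eqref{CMC}. To close the induction, I would first fix $\lambda$ so small that $C_\star \lambda^2 \le \tfrac12 \lambda^\sigma$, which is possible because $\theta(0)>0$ forces $\sigma<2$; here $C_\star$ denotes the universal $C^2$ estimate for convex $(\lambda,\Lambda)$-elliptic harmonic functions with a critical point at the origin, available via Evans--Krylov. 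Next set $\delta = \tfrac14 \lambda^\sigma$; the Approximation Lemma built in Step 1 of the proof of Theorem \ref{sharp1} then determines an $\varepsilon_0$-smallness requirement on the source (applied uniformly to the sequence $v_n$, since $\theta_n$ inherits the modulus of continuity $L_1\omega$ and satisfies $\theta_n(0)=\theta(0)$). Finally take $\varepsilon := \varepsilon_0/K$. The approximation yields an $\widetilde F_n$-harmonic function $h_n$ on $B_{3/4}$ with $Dh_n(0)=0$ and $\|v_n - h_n\|_{L^\infty(B_{1/2})} \le \delta$; Evans--Krylov applied to $h_n$ gives $|h_n(x)-h_n(0)| \le C_\star |x|^2$, whence $\sup_{B_\lambda} |v_n| \le 2\delta + C_\star \lambda^2 \le \lambda^\sigma$, which advances the induction. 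The conclusion for a continuous radius $r\in(0,\lambda)$ follows by sandwiching $\lambda^{n+1}<r\le \lambda^n$.

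The main obstacle is precisely the uniform control of $\|g_n\|_\infty$ displayed above: the identity $n\,\omega(\lambda^n)\ln(1/\lambda) = \omega(\lambda^n)\ln(1/\lambda^n)$ reveals why \eqref{CMC} is the exact threshold that enables iteration at the sharp exponent $\sigma$. Any weaker decay of $\omega$ at the origin would produce an unbounded accumulation of oscillatory errors through the scales and force the infinitesimal loss of regularity recorded in Theorem \ref{sharp1}; under \eqref{CMC}, that accumulation contributes only to a universal multiplicative factor, absorbed by the smallness of $\|f\|_\infty$.
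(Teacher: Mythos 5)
Your proof is correct and takes essentially the same route as the paper. Both arguments reduce to the geometric decay $\sup_{B_{\lambda^n}}|u|\le \lambda^{n\sigma}$ with $\sigma=1+\frac{1}{1+\theta(0)}$, obtained by rescaling $v_n(x)=u(\lambda^n x)/\lambda^{n\sigma}$, observing the rescaled source picks up the factor $\lambda^{n(\theta(0)-\theta(\lambda^n x))/(1+\theta(0))}$, and using the log-type condition \eqref{CMC} to bound this factor by a universal constant uniformly in $n$ (the paper calls it $\Xi$), so that the smallness of $\|f\|_\infty$ can be tuned once and for all to feed the approximation lemma at every scale.
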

\begin{proof}
We revisit the proof of Theorem \ref{sharp1}, owning assumption A\ref{assump_theta3}. Initially, for future use,  we estimate 
\begin{equation}\label{sharp2 eq11}
	\lambda^{1 - \frac{1+\theta(\lambda x)}{1+ \theta(0)}}  \le \left ( \left (\lambda^{-1} \right )^{L_1\omega(\lambda)} \right ) ^{\frac{1}{1+\theta(0)}},
\end{equation}
for all $x\in B_1$. Assumption A\ref{assump_theta3} yields the existence of $0< \lambda_0 \ll 1$ such that
$$
	\begin{array}{lll}
	 	\left (\lambda^{-1} \right )^{\omega(\lambda)} &\le & \left (\lambda^{-1} \right )^{\sqrt{2} L_2 \ln (\lambda^{-1})} \\
		&\le & e^{ \sqrt{2}L_2},
	\end{array}
$$
and thus
\begin{equation}\label{sharp2 eq2}
	\lambda^{1 - \frac{1+\theta(\lambda x)}{1+ \theta(0)}} \le \Xi,
\end{equation}
for all $0 < \lambda \le \lambda_0$, where $\Xi$ depends on $L_1$ and $L_2$.  Arguing as in Step 2 of the proof of Theorem \ref{sharp1}, we can establish
\begin{equation}\label{sharp2 eq4}
	\sup\limits_{B_\lambda} |u(x)| \le \lambda^{1+ \frac{1}{1+\theta(0)}},
\end{equation}
provided $\|f\|_\infty \le \epsilon$. Revisit Subsection \ref{subsec_small} and enforce 
\begin{equation}\label{sharp2 eq5}
	\|f\|_\infty \le \frac{\epsilon}{\Xi+1},
\end{equation}
 so clearly \eqref{sharp2 eq4} still holds. Now, when  we define the rescaled function 
$$
	v(x) := \frac{u(\lambda x)}{\lambda^{1+ \frac{1}{1+\theta(0)}}},
$$
we see that
$$
	|D v|^{\theta(\lambda x)} F_\lambda(D^2 v) = \lambda^{1 - \frac{1+\theta(\lambda x)}{1+\theta(0)}} f(\lambda x)  =: g(x).
$$
Combining \eqref{sharp2 eq2} and \eqref{sharp2 eq5}, we conclude $\|g\|_\infty \le \epsilon$, so $v$ is entitled for \eqref{sharp2 eq4}, which, as before, yields
\begin{equation*}\label{sharp2 eq42}
	\sup\limits_{B_{\lambda^2}} |u(x)| \le \lambda^{2 \left (1+ \frac{1}{1+\theta(0)} \right )},
\end{equation*}
Following the argument, we define
$$
	v_2(x) := \frac{u(\lambda^2 x)}{\lambda^{2 \left ( 1+ \frac{1}{1+\theta(0)}\right )}},
$$
which is normalized and solves
$$
	|D v_2|^{\theta(\lambda^2 x)} F_{\lambda^2} (D^2 v_2) = \left (\lambda^2 \right )^{1 - \frac{1+\theta(\lambda^2 x)}{1+\theta(0)}} f(\lambda^2 x)  =: g_2(x).
$$
Since $\lambda^2 < \lambda \le \lambda_0$, once more \eqref{sharp2 eq2} combined with \eqref{sharp2 eq5} yields $\|g_2\|_\infty \le \epsilon$. That means $v_2$ is also entitled to \eqref{sharp2 eq4}. Hence,
\begin{equation*}\label{sharp2 eq43}
	\sup\limits_{B_{\lambda^3}} |u(x)| \le \lambda^{3 \left (1+ \frac{1}{1+\theta(0)} \right )}.
\end{equation*}
Continuing the process recursively, one concludes the proof of Theorem \ref{sharp2}.
\end{proof}

\begin{remark} In both Theorems \ref{sharp1} and \ref{sharp2} all we need is $C^{1,1^{-}}$ a priori estimates for $F$-harmonic functions, rather than convexity on $F$. This is obtained, for instance, by requesting only the \textit{recession} operators 
$$
	F^{\star}(M) = \lim\limits_{\delta \searrow 0} \delta \cdot  F(\delta^{-1}M), 
$$ 
to be convex, see \cite{PT, ST} for details. If $F$ is an arbitrary fully nonlinear elliptic operator, both Theorems still yield improved estimates, which are naturally limited by the $C^{1,\bar{\alpha}}$ regularity of $F$-harmonic functions. For instance, if $F$ is an arbitrary elliptic operator, then Theorem \ref{sharp2} gives geometric growth estimate of order $\text{O} \left (\frac{2+\gamma}{1+\gamma} \right )$ for all $\gamma \in (0, \bar{\alpha}) \cap (0, \theta(y)]$. 

\end{remark}


\def\polhk#1{\setbox0=\hbox{#1}{\ooalign{\hidewidth\lower1.5ex\hbox{`}\hidewidth\crcr\unhbox0}}}

\Addresses

\end{document}